\newtheorem{definition}{Definition}
\newtheorem{lemma}[definition]{Lemma}
\newtheorem{example}[definition]{Example}
\newtheorem{corollary}[definition]{Corollary}
\newtheorem{theorem}[definition]{Theorem}
\newtheorem{assumption}{Assumption}
\def\eqref#1{$(\ref{#1})$}
\def\NORM#1{\| #1 \|}
\def\SET#1#2{\left\{\,#1\,\left|\,#2\,\right.\right\}}
\def\SSET#1#2{\{\,#1\,|\,#2\,\}} 
\def\R{\mathbb{R}}
\def\ARGMAX{\mathop{\rm argmax}}
\def\FRAC#1#2{{\frac{\strut\displaystyle #1}{\strut\displaystyle #2}}}
\def\PROBLEM#1{\ifmmode{\langle #1 \rangle}\else\ifinner{\langle #1 \rangle}
        \else{$\langle #1 \rangle$}\fi\fi}
\def\IM(#1){{\rm Im}(#1)}
\def\INT(#1){{\rm Int}(#1)}
\def\calA{\mathcal{A}}
\def\calF{\mathcal{F}}
\def\calK{\mathcal{K}}
\def\calL{\mathcal{L}}
\def\calY{\mathcal{Y}}
\def\mathbfit#1{\mbox{\mathversion{bold}$#1$\mathversion{normal}}}
\def\b0{\mathbfit{0}} 
\def\ba{\mathbfit{a}}
\def\be{\mathbfit{e}}
\def\bu{\mathbfit{u}}
\def\bv{\mathbfit{v}}
\def\bx{\mathbfit{x}}
\def\by{\mathbfit{y}}
\def\bz{\mathbfit{z}}
\def\bxi{\mathbfit{\bxi}}
\def\bgamma{{\mathbfit{\gamma}}}
\def\b0{{\mathbfit{0}}}
\newcounter{SSS}
\begin{document}

\title{
An oracle-based projection and rescaling algorithm\\
for linear semi-infinite feasibility problems and \\
its application to SDP and SOCP
}

\author{
 Masakazu Muramatsu\thanks{
 	Department of Computer and Network Engineering, The University of Electro-Communications 1-5-1 Chofugaoka, Chofu-shi, Tokyo, 182-8585 Japan. (E-mail: \mbox{MasakazuMuramatsu@uec.ac.jp})
 }
 \and 
 Tomonari Kitahara\thanks{Kyushu University (E-mail: \mbox{tomonari.kitahara@econ.kyushu-u.ac.jp})} 
 \and
 Bruno F. Louren\c{c}o\thanks{Department of Mathematical Informatics, Graduate School of Information Science \& Technology, University of Tokyo, 7-3-1 Hongo, Bunkyo-ku, Tokyo 113-8656, Japan. (E-mail: \mbox{lourenco@mist.i.u-tokyo.ac.jp})
 	}
 \and
 Takayuki Okuno\thanks{Riken Center for Advanced Intelligence Project (E-mail: \mbox{takayuki.okuno.ks@riken.jp})} 
 \and
 Takashi Tsuchiya\thanks{
 	National Graduate Institute for Policy Studies 7-22-1 Roppongi, Minato-ku, Tokyo 106-8677, Japan. (E-mail: \mbox{tsuchiya@grips.ac.jp})
 }
}


\maketitle

\begin{abstract}
We point out that Chubanov's oracle-based algorithm for linear programming \cite{Chubanov2017}
can be applied almost as it is to linear semi-infinite programming (LSIP). 
In this note, we describe the details and 
prove the polynomial complexity of the algorithm based on the real computation model 
proposed by Blum, Shub and Smale (the BSS model) which is more suitable for 
floating point computation in modern computers. 
The adoption of the BBS model makes our description and analysis much 
simpler than the original one by Chubanov \cite{Chubanov2017}. 
Then we reformulate semidefinite programming (SDP)  and second-order cone programming (SOCP) into LSIP, and 
apply our algorithm to obtain new complexity results for computing interior feasible solutions of homogeneous SDP and SOCP. \\
\textbf{Keywords:} Linear semi-infinite programming, Projection and rescaling algorithm, Oracle-based algorithm, Semidefinite programming.
\end{abstract}

\section{Introduction} \label{sec:intro}
Let $T$ be a (possibly infinite) index set and $\calA = \SSET{\ba_t}{t\in T}\subseteq \R^m$.
In this paper, we consider the problem of finding a solution to the following homogeneous linear semi-infinite system:
\[
 \PROBLEM{D}\  \mbox{find } \by \mbox{ s.t. } \ba_t^T\by > 0 \ (t \in T).
\]
Without loss of generality, we assume that $\ba_t \not=\b0$ for every $t\in T$.
The objective of this article is to show that the recent oracle-based algorithm proposed by Chubanov \cite{Chubanov2017} can be adapted to solve \PROBLEM{D} and to work out the details of this extension.
The advantage of this more general setting is that we can solve, for instance, certain homogeneous feasibility problems over nonpolyhedral cones. 
In fact, we will apply our algorithm to semidefinite programming (SDP) and second-order cone programming (SOCP) in Section \ref{sec:app}.

The problem $\PROBLEM{D}$ is a special case of linear semi-infinite programming (LSIP) \cite{HK93,GL02,LS07} which 
minimizes or maximizes a linear objective function with infinitely many linear constraints.
When $|T|<\infty$, $\PROBLEM{D}$ becomes the linear feasibility problem that Chubanov \cite{Chubanov2017} dealt with.
We emphasize that the main novelty here is the case where $|T| = \infty$.

In this paper, we assume the existence of an oracle for $\PROBLEM{D}$ having the following input/output: 
\medskip\\ 
\textbf{Oracle}$(\by)$ \\
\qquad
\begin{tabular}{ll}
	\textbf{Input:}  & a nonzero vector $\by\in\R^m$ \\ 
	\textbf{Output:} &
	\begin{tabular}[t]{l} 
		$t\in T$ and $\ba_t$ such that $\ba_t^T\by \leq 0$, or  \\
		declare that $\forall t\in T, \ba_t^T \by > 0$.
	\end{tabular}
\end{tabular}
\medskip

\noindent
We show that Chubanov's oracle-based algorithm \cite{Chubanov2017} can be applied almost as it is
to $\PROBLEM{D}$ together with the oracle above.
The  algorithm described here either%
\begin{enumerate}[$(i)$]
	\item returns a solution of $\PROBLEM{D}$,
	\item returns a certificate that there is no solution for $\PROBLEM{D}$,
	\item declare that the maximum volume of parallelogram spanned by vectors in a certain bounded area in the feasible set is less than a positive constant $\epsilon$. (See Section \ref{sec:prelim} for details.)
\end{enumerate}
Furthermore, our algorithm calls the oracle polynomially many times, hence the running time is polynomial if the oracle is 
also polynomial.

We use the real computation model proposed by Blum, Shub and Smale \cite{BSS1989},
while Chubanov \cite{Chubanov2017} uses the standard bit computation model.
This means that the meaning of polynomial complexity is different from that used by Chubanov \cite{Chubanov2017}.
There are significant differences between the real computation model and the bit computation model. 
For example, it is known that determining whether an SDP has an optimal solution falls both in P and NP in the real computation model (Ramana \cite{Ramana95anexact}), while the status is not known in the bit computation model.

By adopting the real computation model, 
we can also avoid the discussion on bit size, which makes our analysis much more transparent. 
As a result, our algorithm can be regarded as a basic scheme to solve convex programming,
having a few common properties with the ellipsoid method \cite{KH80, NY83}. 
Namely, both algorithms work in the variable space
and their complexity can be written in terms of an separation oracle. 
We will discuss on this point in Section \ref{sec:concl}.

For applications of our method to convex programming, 
in the latter part of this paper, we consider to solve feasibility problems for SDP and SOCP 
using the proposed algorithm. 
We reformulate them into the shape of $\PROBLEM{D}$, and 
show that oracles needed are polynomial in the real computation model.
Therefore, polynomial algorithms based on projection and rescaling
for computing an interior-feasible solution of homogeneous SDP or SOCP are established.
We will compare the complexity of the algorithm developed here with some recent projection and rescaling algorithms \cite{PS16,LKMT17}. 

From the point of view of semi-infinite programming, a 
novel point here is that we do not assume any topology on $T$.
This stands in constrast to the usual setting
in semi-infinite programming, where it is usually assumed that $T$ is compact and/or $\ba_t$ is continuous 
under some topology.

%


\subsection{Previous works on projection and rescaling algorithms}
The algorithm described here has its origins in a previous work by 
Chubanov \cite{Ch15}, which proposed a polynomial algorithm 
to compute an interior feasible solution of homogeneous linear programming problems.
Chubanov's original algorithm was extended and improved along several different directions \cite{LRT15, Ro16,PS16,KT16,LKMT17,Fu17,PS18,ZR18,Gu18}.
Nevertheless, a common point among most of those variants is that they are divided 
in two parts, a \emph{basic procedure} and a  \emph{main algorithm}.
The basic procedure searches for an interior feasible solution,
and when it cannot find one, returns a solution called \textit{cut generating vector (CGV)}. 
The main algorithm then uses the cut generating vector to rescale the problem
so that the rescaled problem can be dealt with by the basic procedure again. 
The idea is that the rescaling makes the problem easier, so it becomes more likely that the basic procedure will find an interior feasible solution. 
Typically, the basic procedure will also make use of the projection operator.
For example, in many variants, it is necessary to orthogonally project a point onto an appropriate linear subspace.
Due to this combination of projection and rescaling steps, these algorithms are sometimes called \emph{projection and rescaling algorithms}. 

We will now discuss some of the extensions and enhancements. 
Roos \cite{Ro16} proposed an improved basic procedure which generates sharper cuts and also produced some preliminary numerical results. Improved basic procedures were also proposed by Zhang and Roos \cite{ZR18} and by Gutman \cite{Gu18}.
Li, Roos and Terlaky  \cite{LRT15} extended Chubanov's algorithm to the case of linear feasibility problems with a single non-homogeneous equality.
Pe\~{n}a and Soheili \cite{PS16} 
proposed an algorithm which computes an interior feasible solution of
homogeneous symmetric cone system using projection and rescaling. 
An extension to homogeneous feasibility problems over second order cones was proposed by Kitahara and Tsuchiya in \cite{KT16}. 
Then, Louren\c{c}o, Kitahara, Muramatsu, and Tsuchiya \cite{LKMT17}
extended Chubanov's algorithm \cite{Ch15} to symmetric cone programming in a different way than Pe\~{n}a and Soheili's algorithm and 
gave a different complexity result. 
A summary of the differences between both approaches can be found in Section 1 of \cite{LKMT17}.
Recently, Pe\~{n}a and Soheili discussed  computational aspects of projection and rescaling algorithms for the linear programming case and they presented an extensive set of numerical experiments \cite{PS18}. 

In this paper, we take a closer look at another algorithm proposed by Chubanov \cite{Chubanov2017}, which solves \PROBLEM{D} for the case $|T| < \infty$. The algorithm in \cite{Chubanov2017} is quite similar to the one in \cite{Ch15},  but an important distinction is that it makes use of an oracle.   
The advantage of using an oracle is that the algorithm works only in the space of the variable $\by$ and, under appropriate assumptions, has polynomial complexity even if, say, the number of inequalities is exponential in $m$.
This aspect of the algorithm can be quite useful and, in fact, it was 
used by Fujishige \cite{Fu17} to propose an algorithm for minimization of submodular functions. 

In Section 3.2 of Dadush et al.\cite{Dadush2016},
the same problem $\PROBLEM{D}$ equipped with the same oracle was dealt with, 
and a variant of projection and rescaling algorithms was proposed. 
Their algorithm and the algorithm proposed in this paper share the same idea to apply a projection and rescaling algorithm
to linear semi-infinite programming. However, as the directions of branch are different, there exist some differences. 
Below we describe them.

The first difference is the condition measures on which the algorithms rely.
The algorithm in \cite{Dadush2016} is based on Goffin's measure of a full-dimensional cone $\Sigma$ defined by
\[
 \rho_\Sigma = \sup\SET{r}{B(\bx, r)\subseteq \Sigma, \NORM{\bx} = 1},
\]
where $B(\bx, r)$ is the open ball whose center is $\bx$ and radius is $r$. 
In contrast, the analysis of our algorithm is based on the maximum volume of parallelograms 
spanned by vectors contained in a bounded area of the cone. See Section \ref{sec:prelim} for the exact definition of this condition measure.
For example, if $\Sigma$ is the positive orthant of the $m$ dimensional space, then the maximum volume of parallelograms is $1$,
while $\rho_\Sigma =m^{-1/2}$. On the other hand, when $\Sigma$ is the second-order cone in the three dimensional space, 
the former is $\sqrt{6}/8$ while the latter $1/\sqrt{2}$. In this case, the former is less than the latter. 
To the extent of the authors' knowledge, any useful connection between the two condition measures is not known. 

The two algorithms are different in complexity, too. 
Table \ref{tab:1} shows them.
Here, BP and MA are abbreviations of the basic procedure and the main algorithm, respectively.
The algorithm in \cite{Dadush2016} 
needs $O(m^2)$ iterations of their basic procedure and $O(m\log \rho_\Sigma^{-1})$ rescaling process, 
whereas our algorithm needs $O(m^3)$ iterations of the basic procedure and $O(\log\epsilon)$ rescaling process. 
In case of $\epsilon = \rho_\Sigma^{-1}$, the orders of total arithmetic operations needed become identical. 

\begin{table}[hpt]
\begin{center}
\begin{tabular}{l|c|c} 
 & Dadush et al. \cite{Dadush2016} & proposed \\ \hline
bd. for length of CGVs  & $O(m^{-1})$ & $O(m^{-3/2})$ \\
\# of Iterations within BP & $O(m^2)$ & $O(m^3)$ \\
\# of arith. op. within BP & $O((C_o+m^2)m^2)$ & $O((C_o+m^2)m^3)$ \\
\# of rescaling & $m\log \rho_\Sigma^{-1}$ & $\log \epsilon^{-1}$ \\
Total complexity of arith. op.& $O((C_o+m^2)m^3\log\rho_\Sigma^{-1})$ &$ O((C_o+m^2)m^3\log\epsilon^{-1})$\\
Space complexity of BP & $O(m^3)$ & $O(m^2)$ \\
Space complexity of MA & $O(m^2)$ & $O(m\min(m, \log_2\epsilon^{-1}))$
\end{tabular}
\end{center}
\caption{Differences in complexity between \cite{Dadush2016} and the proposed algorithm } \label{tab:1}
\end{table}

As is shown in the last two rows of Table \ref{tab:1}, there exist differences in space complexity, too. 
In the basic procedure, we keep $\SSET{x_t}{t\in T_+}$ where $|T_+| \leq m+1$ and 
an $(m+1)\times(m+1)$ matrix $G$, whereas the algorithm in \cite{Dadush2016} keeps 
$O(m^2)$ variables and the same number of vectors of size $m$. 
Also in the main algorithm, the proposed algorithm has a possibility to reduce the space complexity
that is depending on the number of rescaling. See Section \ref{sec:main} for the details. 

\subsection{Organization of this paper}
The paper is organized as follows.
In Section \ref{sec:prelim}, we give some preliminary observations on LSIP.
Sections \ref{sec:BP} and \ref{sec:main} describe the basic procedure and the main algorithm,
respectively, giving detailed proofs of their complexities. 
In Section \ref{sec:app}, we reformulate the problem of computing interior feasible solutions 
of a homogeneous SDP and SOCP into LSIP, and apply our algorithm with suitable oracles, respectively.
We establish the total complexities of the algorithm including that of the oracles. We compare the result of SDP 
with some existing results by Pe\~{n}a and Soheili \cite{PS16} and Louren\c{c}o, et al \cite{LKMT17}.
Finally, we give concluding remarks in Section \ref{sec:concl}.

\section{Preliminary observations} \label{sec:prelim}
Associated with $\PROBLEM{D}$, we consider the following problem:
\[
 \PROBLEM{P} 
 \left\{\begin{array}{l} \mbox{find a finite number of positive weights 
 $\SSET{x_t\in\R_{++}}{t\in T_+}$} \\
 \mbox{where $T_+\subseteq T$ and $|T_+|<\infty$
 such that $\sum_{t\in T_+}   \ba_{t} x_{t}= \b0.$}
 \end{array}
    \right.
\]
\begin{theorem} \label{th:alternative}
 It is impossible that both $\PROBLEM{P}$ and $\PROBLEM{D}$ have feasible solutions simultaneously.
\end{theorem}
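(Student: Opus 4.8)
The plan is to derive a contradiction by supposing both systems are feasible simultaneously, and then computing a certain inner product in two ways. Suppose $\by$ solves $\PROBLEM{D}$, so $\ba_t^T\by > 0$ for every $t\in T$, and suppose $\SSET{x_t}{t\in T_+}$ solves $\PROBLEM{P}$, so that $x_t > 0$ for all $t\in T_+$ and $\sum_{t\in T_+}\ba_t x_t = \b0$. The idea is to pair the equality constraint of $\PROBLEM{P}$ against the vector $\by$.

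First I would take the inner product of the identity $\sum_{t\in T_+}\ba_t x_t = \b0$ with $\by$, obtaining $\sum_{t\in T_+} x_t (\ba_t^T\by) = 0$. On the one hand this sum is exactly zero. On the other hand, each term $x_t(\ba_t^T\by)$ is strictly positive: $x_t > 0$ because the weights in $\PROBLEM{P}$ are required to lie in $\R_{++}$, and $\ba_t^T\by > 0$ because $\by$ is feasible for $\PROBLEM{D}$ (here $t\in T_+\subseteq T$, so the inequality applies). Since $T_+$ is nonempty — note that $\sum_{t\in T_+}\ba_t x_t = \b0$ with all $x_t>0$ forces $T_+\neq\emptyset$, as otherwise we could not even write the sum, and in any case the convention $\ba_t\neq\b0$ rules out a trivial one-term solution — the sum of strictly positive terms is strictly positive, contradicting its being zero.

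There is essentially no obstacle here: the argument is a one-line Farkas-type pairing, and the only points requiring a word of care are the nonemptiness of $T_+$ and the strictness of both $x_t>0$ and $\ba_t^T\by>0$, which together make the sum genuinely positive rather than merely nonnegative. No topology on $T$ is needed since $T_+$ is finite by hypothesis, so the sum is an honest finite sum and interchanging it with the inner product is trivial. I would simply state the two evaluations of $\sum_{t\in T_+} x_t\,\ba_t^T\by$ and conclude.
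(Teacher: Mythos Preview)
Your proposal is correct and is essentially identical to the paper's proof: both assume a feasible $\by$ for $\PROBLEM{D}$ and weights $\SSET{x_t}{t\in T_+}$ for $\PROBLEM{P}$, then derive the contradiction $0 < \sum_{t\in T_+} x_t(\ba_t^T\by) = \big(\sum_{t\in T_+} x_t\ba_t\big)^T\by = 0$. Your additional remarks on the nonemptiness of $T_+$ and the strictness of the inequalities are fine but not needed beyond what the paper implicitly uses.
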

\begin{proof}
Let us assume that 
$\SSET{x_t}{t\in T_+}$ and $\by\in\R^m$ are feasible solutions of $\PROBLEM{P}$ and $\PROBLEM{D}$, respectively. 
Then we have a contradiction because
$$
 0< \sum_{t\in T_+} x_{t} (\ba_{t}^T\by) = (\sum_{t\in T_+} x_{t} \ba_{t})^T\by = 0.
$$
\end{proof}
$\PROBLEM{P}$ is known as Haar's dual of $\PROBLEM{D}$, 
and if $T$  is compact and $\ba_t$ is continuous as a function of $t$, 
then they are in fact alternatives each other; $\PROBLEM{D}$ is feasible
if and only if $\PROBLEM{P}$ is infeasible (See, e.g., Lemma 1 of \cite{LS07}).
However, in this paper, we do not assume any topology on $T$;
it is possible that both $\PROBLEM{P}$ and $\PROBLEM{D}$ be simultaneously infeasible, as seen in the next example.

\begin{example}
Let $T=(0, \pi]$ and $\ba_t = (\cos t, \sin t)^T$.
Then both $\PROBLEM{D}$ and $\PROBLEM{P}$ are infeasible.
\end{example}

We consider a bounded version of the feasible region of \PROBLEM{D}:
\[
\calF_0 = \SET{\by\in\R^m}{\ba_t^T\by > \b0 \ (t\in T), \NORM{\by}\leq 1}.
\]
Obviously, $\calF_0$ is nonempty if and only if $\PROBLEM{D}$ is feasible. 
In this paper, we always work under the following assumption.
\begin{assumption}\label{as:full}
The set $\calF_0$ is full-dimensional, i.e., 
$\mbox{dim}\,\calF_0 = m$, or equivalently, there exist linearly independent vectors 
$\by_1, \ldots, \by_m \in \calF_0$. 	
\end{assumption}
The reason we work under Assumption \ref{as:full} is that we will consider the maximum volume of parallelogram spanned by vectors in $\calF_0$.
Intuitively, if this volume is large, the problem is well-conditioned and a solution of $\PROBLEM{D}$ can easily be computed through a suitable basic procedure.
If, however, the volume is small then it is hard to compute a solution in $\calF_0$.
In the extreme case where the volume is zero, we have to search a subspace in $\R^m$ containing $\calF_0$ in its relative interior,
which is harder.  

To check the strength of the assumption, we consider the extended feasible region:
\[
 \calF_1 = \SET{\by\in \R^m}{\ba_t^T\by \geq \b0\ (t\in T), \NORM{\by}\leq 1}.
\]
We have the following lemma.
\begin{lemma}\label{lemma:fd}
If $\calF_1$ is full-dimensional, then $\PROBLEM{D}$ has feasible solutions.
\end{lemma}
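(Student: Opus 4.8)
The plan is to exhibit an explicit solution of $\PROBLEM{D}$ obtained from a basis of $\R^m$ lying inside $\calF_1$. First I would unpack the hypothesis: since $\calF_1$ is full-dimensional, there exist linearly independent vectors $\by_1, \ldots, \by_m \in \calF_1$, so that $\ba_t^T\by_i \geq 0$ for every $t\in T$ and every $i \in \{1, \ldots, m\}$, and $\{\by_1, \ldots, \by_m\}$ forms a basis of $\R^m$.

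Next I would set $\by := \sum_{i=1}^m \by_i$ (any strictly positive combination of the $\by_i$ works equally well) and verify that $\ba_t^T\by > 0$ for all $t\in T$. Each term $\ba_t^T\by_i$ is nonnegative by membership in $\calF_1$, so it suffices to rule out the case that they all vanish. If $\ba_t^T\by_i = 0$ for every $i$, then, since the $\by_i$ span $\R^m$, the linear functional $\by \mapsto \ba_t^T\by$ would be identically zero, forcing $\ba_t = \b0$ and contradicting the standing assumption $\ba_t \neq \b0$. Hence at least one $\ba_t^T\by_i$ is strictly positive, and therefore $\ba_t^T\by = \sum_{i=1}^m \ba_t^T\by_i > 0$. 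Since this holds for every $t\in T$, the vector $\by$ is feasible for $\PROBLEM{D}$.

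The argument is essentially immediate, so I do not expect a real obstacle; the only point that needs care is the degenerate possibility that some $\ba_t$ is orthogonal to all the $\by_i$ at once, and this is precisely where the spanning property of the basis and the normalization $\ba_t \neq \b0$ enter. Note also that $\PROBLEM{D}$ carries no norm restriction, so there is no need to pull $\by$ back into the unit ball; if one wants a solution inside $\calF_0$ one may simply replace $\by$ by $\by/\NORM{\by}$.
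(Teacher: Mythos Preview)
Your argument is correct. The only step one might want to justify more carefully is the opening claim that full-dimensionality of $\calF_1$ yields $m$ linearly independent vectors in $\calF_1$; but this is exactly how the paper itself characterizes full-dimensionality (see the formulation of Assumption~\ref{as:full}), and in any case the linear span of a full-dimensional set is all of $\R^m$, so such a basis exists.

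Your route is genuinely different from the paper's. The paper picks an \emph{interior} point $\by$ of $\calF_1$ and argues by perturbation: if $\ba_t^T\by = 0$ for some $t$, then moving $\by$ slightly in the direction $-\ba_t$ stays inside $\calF_1$ but makes $\ba_t^T\by$ negative, a contradiction; hence the interior point itself solves $\PROBLEM{D}$. Your proof is purely algebraic: you take a spanning family inside $\calF_1$, sum it, and use the spanning property together with $\ba_t\neq\b0$ to force strict positivity. Both proofs ultimately lean on $\ba_t\neq\b0$ (the paper needs it to normalize $\ba_t$ in the perturbation). The paper's version has the minor conceptual bonus that the interior point already lies in $\calF_0$, while yours produces a new vector that may have to be renormalized if one wants it in $\calF_0$; on the other hand, your argument is more explicit and avoids any appeal to topology or interior points.
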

\begin{proof}
Let $\by$ be an interior point of $\calF_1$.
By definition, there exists $\epsilon>0$ such that 
$\by + B(\epsilon) \subseteq \calF_1$ where $B(\epsilon)$ is the open ball
whose radius is $\epsilon$.

Suppose that there is no feasible solution for $\PROBLEM{D}$.
This implies that there exists $t\in T$ such that $\ba_t^T\by=0$.
For such $t$, let $\bu = -\epsilon\ba_t / (2\NORM{\ba_t})$, which is contained in $B(\epsilon)$.
Then we have $\ba_t^T(\by + \bu) = - \epsilon \NORM{\ba_t}/2 < 0$, which contradicts 
the fact that $\by + \bu \in \calF_1$ .
Therefore, $\by$ is a feasible solution to $\PROBLEM{D}$.
\end{proof}

Lemma \ref{lemma:fd} has the following consequence. In 
\cite{Chubanov2017}, Chubanov mentions that his oracle-based algorithm solves the following problem.
\begin{equation}\label{eq:cb}
  \mbox{find } \by \mbox{ s.t. } A^T\by \geq  0, \by \neq 0,
\end{equation}
where $A=(\ba_1,\ldots,\ba_n)$ is some $m\times n$ matrix. He also assumes that the feasible region $\SSET{\by}{A^T\by \geq  0}$
is either full-dimensional or has no nonzero solution.
From Lemma \ref{lemma:fd}, we conclude that all the nontrivial problems considered in \cite{Chubanov2017} 
have a solution $\by$ that satisfy the inequalities strictly.
That is why we consider here the problem $\PROBLEM{D}$, instead of dealing with a generalized form of \eqref{eq:cb}.

Moving on, in the next example we show that  
there is a case where $\PROBLEM{D}$ has feasible solutions but $\calF_1$ is not full-dimensional.
\begin{example}
Let $T=(0, \pi)$ and $\ba_t = (\cos t, \sin t)^T$.
Then the feasible region of $\PROBLEM{D}$ is 
$\SSET{\lambda (0, 1)^T}{\lambda > 0}$, which is one-dimensional,
and so is $\calF_1$. 
\end{example}

Given an invertible matrix $M\in \R^{m\times m}$, we define the following problem scaled by $M$:
\[
 \PROBLEM{D(M)}\  \mbox{find } \tilde{\by} \mbox{ s.t. } \ba_t^T M \tilde{\by} > 0 \ (t \in T).
\]
Notice that $\PROBLEM{D} = \PROBLEM{D(I)}$.
Also, the relationship:
\[
 \by \mbox{ is feasible for } \PROBLEM{D} 
 \Leftrightarrow 
 \tilde{\by} = M^{-1} \by \mbox{ is feasible for } \PROBLEM{D(M)}
\]
is obvious.

Now we observe some properties on maximum volume of 
parallelogram spanned by vectors in a bounded set. These relations are used in the following sections 
to prove the polynomial convergence of the proposed algorithm.

For a bounded set $\calF\subseteq\R^m$, we define:
\begin{eqnarray*}
\calY(\calF)&=& \
\SET{Y\in\R^{m\times m}}{Y = (\by_1,\ldots,\by_m), \ \by_i\in\calF\ (i=1,\ldots,m)} \\
d^\ast(\calF) &=& \sup\SET{|\det Y|}{Y\in \calY(\calF)} .
\end{eqnarray*}
For an invertible matrix $M\in \R^{m\times m}$ and a scaling factor $\delta >0$, 
we define:
\[
 \calF(M, \delta) = \SET{\tilde{\by}}{\ba_t^TM\tilde{\by} \geq \b0 \ (t\in T), \NORM{\tilde{\by}}\leq \delta}.
\]
Some observations follow.
\begin{lemma} \label{lem:calF}
\begin{enumerate}
\item If $\calF\supseteq\calF'$, then $d^\ast(\calF) \geq d^\ast(\calF')$.
\item $d^\ast(M^{-1}\calF) = |\det M^{-1}| d^\ast (\calF)$.
\item $d^\ast(\calF(M, \delta)) = \delta^m d^\ast(\calF(M, 1))$.
\end{enumerate}
\end{lemma}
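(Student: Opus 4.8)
The plan is to deduce all three statements from two elementary facts: how the set $\calY(\calF)$ behaves under inclusions and under linear maps of $\calF$, together with the multiplicativity $|\det(AB)| = |\det A|\,|\det B|$ of the absolute value of the determinant. For part~1, I would simply observe that $\calF \supseteq \calF'$ forces $\calY(\calF) \supseteq \calY(\calF')$: a matrix whose columns all lie in $\calF'$ has all of its columns in $\calF$ as well. Since $d^\ast$ is the supremum of $|\det Y|$ over $Y$ ranging in these sets, enlarging the index set cannot decrease the value, so $d^\ast(\calF) \geq d^\ast(\calF')$.

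For part~2, the crucial observation is the identity
\[
 \calY(M^{-1}\calF) = \SET{M^{-1}Y}{Y \in \calY(\calF)},
\]
which holds because a matrix $Z$ has all its columns in $M^{-1}\calF$ if and only if $MZ$ has all its columns in $\calF$; since $M$ is invertible this is a bijection between the two sets. Writing $Z = M^{-1}Y$ and using $|\det Z| = |\det M^{-1}|\,|\det Y|$, and noting that the fixed positive constant $|\det M^{-1}|$ factors out of the supremum, I obtain $d^\ast(M^{-1}\calF) = |\det M^{-1}|\, d^\ast(\calF)$.

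For part~3, I would first verify the scaling identity $\calF(M,\delta) = \delta\,\calF(M,1)$: the homogeneous constraints $\ba_t^T M \tilde{\by} \geq \b0$ are unchanged when $\tilde{\by}$ is multiplied by the positive scalar $\delta^{-1}$, while the bound $\NORM{\tilde{\by}} \leq \delta$ becomes $\NORM{\delta^{-1}\tilde{\by}} \leq 1$. Part~3 is then the special case of part~2 in which $M^{-1}$ is taken to be the scalar matrix $\delta I$ (so that $|\det M^{-1}| = \delta^m$); equivalently, one repeats the bijection argument of part~2 with $\delta I$ in place of $M^{-1}$.

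I do not anticipate a genuine obstacle here: the only step needing a sentence of care is the set identity in part~2 and its rescaled analogue, namely that left-multiplication by an invertible matrix carries $\calY(\calF)$ bijectively onto $\calY(M^{-1}\calF)$. Everything else is the definition of the supremum together with the product rule for determinants, so the proof should be short.
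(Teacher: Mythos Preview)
Your proposal is correct and follows essentially the same route as the paper: both argue part~1 by inclusion of the sets $\calY(\cdot)$, part~2 by the bijection $Y\mapsto M^{-1}Y$ together with multiplicativity of $|\det|$, and part~3 via the scaling identity $\calF(M,\delta)=\delta\,\calF(M,1)$. The only cosmetic difference is that you explicitly frame part~3 as an instance of part~2 with $M^{-1}=\delta I$, whereas the paper simply repeats the bijection computation directly; the content is the same.
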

\begin{proof}
The first statement is obvious. 

To prove the second statement, choose an arbitrary $\tilde{Y}\in \calY(M^{-1}\calF)$, i.e., 
\[
 \tilde{Y} = \left(M^{-1}\by_1 \ldots M^{-1}\by_m \right) \mbox{ where } \by_i\in \calF \ (i=1,\ldots,m).
\]
Then we can write $\tilde{Y} = M^{-1} Y$ where $Y = \left(\by_1 \ldots \by_m\right)$.
Therefore, we have
\begin{eqnarray*}
 d^\ast(M^{-1}\calF) &=& \sup\SET{|\det \tilde{Y}|}{\tilde{Y}\in M^{-1}\calF} \\
    &=& \sup\SET{ |\det M^{-1}| |\det Y|}{Y\in\calF} \\
    &=& |\det M^{-1}| \sup\SET{ |\det Y|}{Y\in\calF} \\
     &=&  |\det M^{-1}| d^\ast(\calF),
\end{eqnarray*}
which is the second statement.

For the final statement, first observe
\[
 \delta \tilde{\by} \in \calF(M, \delta) \Leftrightarrow \tilde{\by} \in \calF(M, 1),
\]
and thus
\[
 \delta \tilde{Y}\in \calY(\calF(M, \delta))  \Leftrightarrow \tilde{Y} \in \calY(\calF(M, 1)).
\]
From this, it follows that
\begin{eqnarray*}
 d^\ast(\calF(M, \delta)) &=& \sup\SSET{|\det (\delta \tilde{Y})|}{ \tilde{Y} \in \calY(\calF(M, 1))} \\
  &=& \sup\SSET{\delta^m |\det \tilde{Y}|}{ \tilde{Y} \in \calY(\calF(M, 1))}  \\
  &=& \delta^m d^\ast(\calF(M, 1)).
\end{eqnarray*}
\end{proof}

\section{Basic Procedure} \label{sec:BP}

The basic procedure receives the scaling matrix $M$ and a positive number $\mu$, 
and returns either a solution of $\PROBLEM{D}$ or $\PROBLEM{P}$, or
positive weights $\SSET{x_t}{t\in T_+}$ that satisfy a certain property depending on $\mu$. 

Inside the basic procedure,  we use the notation
$$
 \tilde{\ba}_t = \frac{M^T \ba_t}{\NORM{M^T\ba_t}}
$$
for $t\in T$.
The complexity to compute $\tilde{\ba}_t$ from $\ba_t$ and $M$ is $O(m^2)$ in the real computation model. 
Note that we shall never compute $\SET{\tilde{\ba}_t}{t\in T}$. Since $|T| = \infty$, this is computationally intractable. 
Instead, we compute $\tilde{\ba}_t$ only when it is needed.
More specifically, we keep positive weights $\SSET{x_t}{t\in T_+}$ where $T_+\subseteq T$ with $|T_+| \leq m+1$.
Such a set of positive weights can be implemented by using appropriate data structure such as list.
We presume that $x_t = 0$ for $t\not\in T_+$.

In each iteration, 
the basic procedure calls the oracle with $M\tilde{\bz}$ where $\tilde{\bz}=\sum_{t\in T_+} x_t \tilde{\ba}_t$.
If the oracle detects feasibility of $M\tilde{\bz}$, returns it as a solution of $\PROBLEM{D}$.
If the oracle returns a violating index $\hat{t}$, 
the basic procedure computes the minimum distance point from the origin between $\tilde{\bz}$ and $\tilde{\ba}_{\hat{t}}$,
which will replace $\tilde{\bz}$. The index $\hat{t}$ is added to $T_+$,
and if $|T_+| > m+1$, a procedure called \textit{the index eliminating procedure} is 
invoked to replace the positive weights by a new set $\SSET{x_t}{t\in T_+}$ satisfying $|T_+| \leq m+1$, 
$\sum_{t\in T_+} x_t\ba_t=\tilde{\bz}$, and $\sum_{t\in T_+} x_t=1$.

Now we describe the basic procedure step by step.
In the following description, we assume that the oracle always returns the second case,
because otherwise we  immediately obtain a solution of $\PROBLEM{D}$.
 
\vspace{1cm}

\bigskip

\noindent\textbf{Basic Procedure} \\[2mm]
\qquad
\begin{tabular}{ll}
\textbf{Input:}  & a positive number $\mu>0$ and an invertible matrix $M\in\R^{m\times m}$ \\
\textbf{Output:} &\hspace{-1cm}
\begin{minipage}[t]{11cm}
\begin{enumerate}
\item  $\tilde{\by}$ such that $\forall t\in T, \ba_t^T M \tilde{\by} > 0$, or 
\item $T_+ \subseteq T$ where $|T_+| \leq m+1$, and 
  positive weights $\SSET{x_t}{t\in T_+}$  
  such that $\sum_{t\in T_+} x_t \tilde{\ba}_t = \b0$, or 
\item $T_+\subseteq T$ where $|T_+| \leq m+1$, and 
  positive weights $\SSET{x_t}{t\in T_+}$  
   such that $\NORM{\sum_{t\in T_+}x_t\tilde{\ba}_t} \leq \mu / (m+1)$.
\end{enumerate}
\end{minipage}\\
\textbf{Steps:} & 
\end{tabular}
\begin{itemize}
\item[] // \textbf{Initialization}: We suppose that $x_t = 0$ for every $t \in T$.
\item[] Step 1. Let $\tilde{\by}$ be an arbitrary nonzero vector. 
\item[] Step 2.  Let $\bar{t}\in T$ be the index returned by Oracle$(M\tilde{\by})$
and compute $\tilde{\ba}_{\bar{t}}$.
\item[] Step 3.  Let $T_+ =  \{\bar{t}\}$, $x_{\bar{t}} = 1$, and $\tilde{\bz}=\tilde{\ba}_{\bar{t}}$.  
\item[] // \textbf{Loop} 
\item[] Step 4.  Let $\hat{t}\in T$ be the index returned by Oracle$(M\tilde{\bz})$
and compute $\tilde{\ba}_{\hat{t}}$.
\item[] Step 5.  Let $T'_+=T_+\cup\{\hat{t}\}$.
\item[] Step 6.  For $t\in T'_+$, set
\begin{equation} \label{eq:BPchoiceAlpha}
 x'_{t} = \left\{\begin{array}{ll}
     \alpha x_{t} & \mbox{if } t \not=\hat{t} \\
     \alpha x_{\hat{t}} + 1-\alpha & \mbox{if } t =\hat{t} 
     \end{array} \right.
     \mbox{ where }
     \alpha = \FRAC{\tilde{\ba}_{\hat{t}}^T(\tilde{\ba}_{\hat{t}}-\tilde{\bz})}{\NORM{\tilde{\ba}_{\hat{t}}-\tilde{\bz}}^2}.
\end{equation}
\item[] Step 7.  Let $\tilde{\bz} = \sum_{t\in T'_+} x'_t\tilde{\ba}_t$.
\item[] Step 8. Call \textit{index elimination procedure} to compute $T_+$ and $\SSET{x_t}{t \in T_+}$ 
such that $|T_+| \leq m$, $x_t \geq 0 (t\in T_+)$, $\sum_{t\in T_+}x_t = 1$, and   
$\sum_{t\in T_+} x_t \tilde{\ba}_t = \tilde{\bz}$.
\item[] Step 9. If $\tilde{\bz} = \b0$, then return $\SSET{x_t}{t\in T_+}$ with the second status. 
\item[] Step 10.  If $\NORM{\tilde{\bz}}\leq \mu/(m+1)$, then return the third status with $\SET{x_t}{t\in T_+}$.
\item[] Step 11.  Go to Step 4.
\end{itemize}
\medskip

The index elimination procedure works as follows. 
\begin{enumerate}
\item During the first $m$ iterations, it does nothing. (Just set $T_+=T'_+$ and $x_t = x'_t$ for every $t\in T_+$.)
\item At the $(m+1)$-th iteration, it computes\footnote{If this matrix is not invertible, then just skip this procedure until 
we have $m+1$ independent vectors.} 
\begin{equation} \label{eq:G}
G =  \left(\begin{array}{ccc}\tilde{\ba}_{t_1} & \ldots & \tilde{\ba}_{t_{m+1}}\\
                1 & \ldots &  1 \end{array} \right)^{-1},
\end{equation}
where $T_+ = T'_+ = \{t_1, \ldots, t_{m+1}\}$, and store it. 
\item If $|T'_+| = m+2$ with $\hat{t}$ being the last index added, then compute:
\[
 \bgamma = G \left(\begin{array}{c}\ba_{\hat{t}}\\ 0 \end{array} \right).
\]
Let 
\[
 t^\ast = \ARGMAX\SET{\beta}{x'_t-\beta\gamma_t\geq 0}
   = \ARGMAX\SET{-x'_t/\gamma_t}{\gamma_t < 0}
\]
and $\beta^\ast = -x'_{t^\ast}/\gamma_{t^\ast}$.
If $\beta^\ast \geq x'_{\hat{t}}$, then set
\[
 x_t = x'_t+x'_{\hat{t}}\gamma_t \ (t\in T_+).
\]
Else, set
\[
 x_t = x'_t - \frac{x'_{t^\ast}}{\gamma_{t^\ast}}\gamma_t \ (t\in T'_+\backslash \{t^\ast, \hat{t}\}),\ 
 x_{\hat{t}} = x'_t + \frac{x'_{t^\ast}}{\gamma_{t^\ast}},
\]
and update
\[
 G \leftarrow G - \FRAC{G\be_{t^\ast}\bar{\ba}^TG}{1+\be_{t^\ast}^TG\bar{\ba}}, \ 
 T_+ \leftarrow T'_+ \backslash \{t^\ast\}, 
\]
where $\bar{\ba} = \left(\begin{array}{c} \ba_{\hat{t}}-\ba_{t^\ast} \\ 0 \end{array}\right)$
and  $\be_{t^\ast}$ is the vector having $1$ at the position corresponding to $t^\ast$ and zero otherwise. 
\end{enumerate}

Using the index elimination procedure, we can bound the size of $T_+$ by $m+1$.
\begin{lemma}
The index elimination procedure returns $T_+$ and $\SSET{x_t}{t\in T_+}$ such that 
\begin{eqnarray}
x_t&\geq&  0 \ (t\in T_+) \label{eq:lemIEP1} \\
\sum_{t\in T_+} x_t \tilde{\ba}_t &=&\tilde{\bz}  \label{eq:lemIEP2}  \\
\sum_{t\in T_+} x_t &=& 1 \label{eq:lemIEP3} \\
|T_+| &=& m+1. \label{eq:lemIEP4}
\end{eqnarray}
Furthermore, \eqref{eq:G} holds with a suitable arrangement of vectors $\SSET{\tilde{\ba}_t}{t\in T_+}$
when the index elimination procedure ends.
\end{lemma}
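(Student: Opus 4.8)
The plan is an induction on the iterations of the \textbf{Loop} of the Basic Procedure, carrying \eqref{eq:lemIEP1}--\eqref{eq:lemIEP4} as invariants, together with the auxiliary invariant that, once $m+1$ linearly independent vectors among the $\tilde{\ba}_t$ have appeared, the stored matrix $G$ is the inverse of the bordered matrix in \eqref{eq:G} built from the current $T_+$ (in the order its elements are stored). Each iteration splits into the $\alpha$-update of Steps 5--7, common to all iterations, and the index elimination, which does nothing as long as $|T'_+|\leq m+1$.

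For the $\alpha$-update I would first record that $\alpha\in(0,1]$. The oracle is queried at $M\tilde{\bz}$ and returns $\hat{t}$ with $\ba_{\hat{t}}^TM\tilde{\bz}\leq 0$, i.e.\ $\tilde{\ba}_{\hat{t}}^T\tilde{\bz}\leq 0$; moreover $\NORM{\tilde{\bz}}\leq 1=\NORM{\tilde{\ba}_{\hat{t}}}$, since $\tilde{\bz}$ is inductively a convex combination of unit vectors. Hence $\tilde{\ba}_{\hat{t}}^T(\tilde{\ba}_{\hat{t}}-\tilde{\bz})=1-\tilde{\ba}_{\hat{t}}^T\tilde{\bz}\geq 1>0$ and $\NORM{\tilde{\ba}_{\hat{t}}-\tilde{\bz}}^2-\tilde{\ba}_{\hat{t}}^T(\tilde{\ba}_{\hat{t}}-\tilde{\bz})=\NORM{\tilde{\bz}}^2-\tilde{\ba}_{\hat{t}}^T\tilde{\bz}\geq 0$, so $0<\alpha\leq 1$. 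Granting the induction hypothesis $x_t\geq 0$, $\sum_{t\in T_+}x_t=1$, $\sum_{t\in T_+}x_t\tilde{\ba}_t=\tilde{\bz}$, formula \eqref{eq:BPchoiceAlpha} presents $\SSET{x'_t}{t\in T'_+}$ as a convex combination of $\SSET{x_t}{t\in T_+}$ and the unit mass at $\hat{t}$, so $x'_t\geq 0$ and $\sum_{t\in T'_+}x'_t=1$, and Step 7 sets $\tilde{\bz}$ so that $\sum_{t\in T'_+}x'_t\tilde{\ba}_t=\tilde{\bz}$. During the first $m$ iterations $|T_+|$ grows by at most one per iteration and the elimination is inactive, so all the invariants persist, with $|T_+|\leq m+1$.

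The core is the $(m+1)$-st iteration onward, where $|T'_+|=m+2$ and one index must be dropped; this is a Carath\'eodory reduction. Reading off the definition of $G$, the vector $\bgamma$ satisfies $\sum_i\gamma_{t_i}\tilde{\ba}_{t_i}=\tilde{\ba}_{\hat{t}}$ and $\sum_i\gamma_{t_i}=1$, so $\lambda_{\hat{t}}=-1$, $\lambda_{t_i}=\gamma_{t_i}$ is an affine dependence: $\sum_{t\in T'_+}\lambda_t\tilde{\ba}_t=\b0$, $\sum_{t\in T'_+}\lambda_t=0$. The procedure moves $x'_t\mapsto x'_t+\theta\lambda_t$ by the largest $\theta\geq 0$ preserving nonnegativity, namely $\theta^\ast=\min\{x'_{\hat{t}},\beta^\ast\}$ with $\beta^\ast=\min_{\gamma_{t_i}<0}(-x'_{t_i}/\gamma_{t_i})$ attained at $t^\ast$. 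If $x'_{\hat{t}}\leq\beta^\ast$, the weight of $\hat{t}$ reaches $0$ first, so we drop $\hat{t}$ and keep $x_{t_i}=x'_{t_i}+x'_{\hat{t}}\gamma_{t_i}$; substituting $\tilde{\ba}_{\hat{t}}=\sum_i\gamma_{t_i}\tilde{\ba}_{t_i}$ and $\sum_i\gamma_{t_i}=1$ into $\tilde{\bz}=\sum_{t\in T'_+}x'_t\tilde{\ba}_t$ gives $\sum_i x_{t_i}\tilde{\ba}_{t_i}=\tilde{\bz}$ and $\sum_i x_{t_i}=1$, and nonnegativity is exactly the case hypothesis. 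Otherwise the weight of $t^\ast$ reaches $0$ first, so we drop $t^\ast$, and the analogous substitution --- using $\beta^\ast=-x'_{t^\ast}/\gamma_{t^\ast}$ and that $t^\ast$ attains the ratio minimum --- gives the same three invariants together with $x_{\hat{t}}=x'_{\hat{t}}-\beta^\ast>0$. In both cases $|T_+|=m+1$.

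Finally I would propagate \eqref{eq:G}. In the first branch $G$ and the set of retained indices are untouched, so there is nothing to do. In the second branch, the new $T_+$ is obtained from the old bordered basis matrix $B$ (with $B^{-1}=G$) by replacing its column $t^\ast$ with $(\tilde{\ba}_{\hat{t}}^T,1)^T$, so the new matrix is $\hat{B}=B+\bar{\ba}\,\be_{t^\ast}^T$ with $\bar{\ba}=(\tilde{\ba}_{\hat{t}}^T-\tilde{\ba}_{t^\ast}^T,0)^T$; applying the Sherman--Morrison formula to $B^{-1}=G$ yields the displayed rank-one update of $G$, whose denominator $1+\be_{t^\ast}^TG\bar{\ba}$ works out to $\gamma_{t^\ast}$ and is therefore nonzero since $\gamma_{t^\ast}<0$, which also certifies (via the matrix determinant lemma) that $\hat{B}$ is invertible, i.e.\ the new bordered vectors are linearly independent, so $|T_+|=m+1$ persists. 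I expect the main obstacle to be precisely this last step: identifying the correct rank-one perturbation of $B$, matching signs and transposes with the displayed update of $G$, and verifying that its denominator is the quantity already shown nonzero by the ratio test. I would also dispatch the footnote case --- the bordered matrix singular before $m+1$ linearly independent $\tilde{\ba}_t$ have appeared --- by noting that the procedure then merely defers forming $G$, so that \eqref{eq:lemIEP4} and \eqref{eq:G} are asserted only once such a basis is available, which is exactly what ``when the index elimination procedure ends'' requires.
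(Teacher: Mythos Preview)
Your proof is correct and follows the same strategy as the paper's: a case split on whether $\hat{t}$ or an old index $t^\ast$ is eliminated, direct verification of \eqref{eq:lemIEP1}--\eqref{eq:lemIEP4} in each branch via the affine dependence encoded by $G$, and the Sherman--Morrison formula for the rank-one update of $G$. In fact you have silently repaired two slips in the procedure as printed --- taking $\bgamma$ so that $\sum_i\gamma_{t_i}=1$ (the last component of the right-hand side defining $\bgamma$ should be $1$, not $0$) and reading the ratio test as an $\operatorname{argmin}$ rather than an $\operatorname{argmax}$ --- and both corrections are exactly what is needed for \eqref{eq:lemIEP1} and \eqref{eq:lemIEP3} to hold; your explicit computation that the Sherman--Morrison denominator equals $\gamma_{t^\ast}\neq 0$ is also a point the paper leaves implicit.
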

\begin{proof}
In the procedure, there are two cases, (i) $\beta^\ast \geq x'_{\hat{t}}$, and (ii) $\beta^\ast < x'_{\hat{t}}$,
and we divide the proof accordingly. 

\noindent\textbf{Case (i).}
When $\beta^\ast\geq x'_{\hat{t}}$, 
since $T_+$ does not change, \eqref{eq:G} and \eqref{eq:lemIEP4} hold automatically. 
Since
\[
 \tilde{\bz} = \sum_{t\in T'_+}x'_t\tilde{\ba}_t
  = \sum_{t\in T_+\backslash\{\hat{t}\}} \left(x'_t + x'_{\hat{t}}\gamma_t\right) \tilde{\ba}_t
  = \sum_{t\in T_+\backslash\{\hat{t}\}} x_t \tilde{\ba}_t,
\]
\eqref{eq:lemIEP2} also holds. 
The equality $\sum_{t\in T_+\backslash\{\hat{t}\}} \gamma_t = 0$ implies \eqref{eq:lemIEP3}. 

\noindent\textbf{Case (ii).}
When $\beta^\ast < x'_{\hat{t}}$,
since we replace $t^\ast$ with $\hat{t}$, 
it is obvious that \eqref{eq:lemIEP4} holds. 

The equation \eqref{eq:G} implies
\[
 \sum_{t\in T'_+\backslash\{\hat{t}\}} \gamma_t\tilde{\ba}_t = \ba_{\hat{t}},
\] 
from which it follows that
\[
 \ba_{t^\ast} = -\sum_{t\in T_+\backslash\{\hat{t}\}} \gamma_t/\gamma_{t^\ast}\tilde{\ba}_t + 1/\gamma_{t^\ast} \tilde{\ba}_{\hat{t}}.
\]
Now we have
\begin{eqnarray*}
 \sum_{t\in T_+} x_t \tilde{\ba}_t 
 &=&  \sum_{t\in T_+\backslash\{\hat{t}\}} \left(x'_t -\frac{x'_{t^\ast}}{\gamma_{t^\ast}}\gamma_t\right)\tilde{\ba}_t + 
   \left(x'_{\hat{t}}+\frac{x'_{t^\ast}}{\gamma_{t^\ast}}\right) \tilde{\ba}_{\hat{t}} \\
 &=&  \sum_{t\in T'_+} x'_t\tilde{\ba}_t = \tilde{\bz}.
\end{eqnarray*}
Similarly, \eqref{eq:lemIEP3} easily follows. 

Due to the definition of $t^\ast$, $x_t \geq 0$ for every $t\in T_+\backslash\{\hat{t}\}$,
and for $\hat{t}$, $x_{\hat{t}}> 0$ since $\beta^\ast<x'_{\hat{t}}$. 

Finally, the Sherman-Morrison-Woodbury formula can be applied to the relation
\[
  \left(\begin{array}{ccccc}\tilde{\ba}_{t_1} & \ldots & \tilde{\ba}_{\hat{t}}& \ldots & \tilde{\ba}_{t_{m+1}}\\
                1 &\ldots & 1 & \ldots & 1 \end{array} \right)
 =  \left(\begin{array}{ccccc}\tilde{\ba}_{t_1} & \ldots & \tilde{\ba}_{t^\ast} & \ldots & \tilde{\ba}_{t_{m+1}}\\
                1 &\ldots & 1 & \ldots &  1 \end{array} \right)
                + \bar{\ba}(\be_{t^\ast})^T,
\]
which produces
\[
  \left(\begin{array}{ccccc}\tilde{\ba}_{t_1} & \ldots & \tilde{\ba}_{\hat{t}}& \ldots & \tilde{\ba}_{t_{m+1}}\\
                1 & \ldots &1 & \ldots &  1 \end{array} \right)^{-1}
                = \FRAC{G\be_{t^\ast}\bar{\ba}^TG}{1+\be_{t^\ast}^TG\bar{\ba}}.
\]
This implies that the new $G$ is the inverse of the left-hand side of the above,
which proves \eqref{eq:G}.
\end{proof}

The choice of $\alpha$ in Step 6 ensures the following property of the procedure.
\begin{lemma} \label{lem:BPineq}
Suppose that $\sum_{t\in T_+} x_t = 1$ at the beginning of Step 6. 
Then it hold that
\begin{equation} \label{eq:BPeq1}
 \sum_{t\in T'_+}x'_t = 1
\end{equation}
and 
\begin{equation} \label{eq:BPineq1}
 \FRAC{1}{\NORM{\sum_{t\in T'_+}\tilde{\ba}_t x'_t}^2} \geq  \FRAC{1}{\NORM{\sum_{t\in T_+}\tilde{\ba}_t x_t}^2} +  1
\end{equation}
at the end of Step 6.
\end{lemma}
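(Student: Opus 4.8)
The plan is to translate the weight update \eqref{eq:BPchoiceAlpha} into a statement about the aggregated vectors $\tilde{\bz} := \sum_{t\in T_+}x_t\tilde{\ba}_t$ (the value at the start of Step 6) and $\tilde{\bz}' := \sum_{t\in T'_+}x'_t\tilde{\ba}_t$ (the value at the end), and then to reduce \eqref{eq:BPineq1} to an elementary one-variable inequality. First I would substitute \eqref{eq:BPchoiceAlpha}, using the convention $x_t = 0$ for $t\notin T_+$ together with the hypothesis $\sum_{t\in T_+}x_t = 1$; this immediately gives $\sum_{t\in T'_+}x'_t = \alpha\sum_{t\in T_+}x_t + (1-\alpha) = 1$, which is \eqref{eq:BPeq1}, and the analogous computation with the $\tilde{\ba}_t$'s gives $\tilde{\bz}' = \alpha\tilde{\bz} + (1-\alpha)\tilde{\ba}_{\hat{t}}$.

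Next I would observe that the scalar $\alpha$ in \eqref{eq:BPchoiceAlpha} is precisely the unconstrained minimizer of the convex quadratic $\lambda\mapsto\NORM{\lambda\tilde{\bz}+(1-\lambda)\tilde{\ba}_{\hat{t}}}^2$, so $\tilde{\bz}'$ is the minimum-norm point on the line through $\tilde{\bz}$ and $\tilde{\ba}_{\hat{t}}$. Writing $z = \NORM{\tilde{\bz}}$ and $c = \tilde{\ba}_{\hat{t}}^T\tilde{\bz}$ and evaluating that quadratic at $\lambda = \alpha$ would give the closed form $\NORM{\tilde{\bz}'}^2 = (z^2-c^2)/(z^2-2c+1)$. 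The two structural inputs here are $\NORM{\tilde{\ba}_{\hat{t}}} = 1$, which is the definition of $\tilde{\ba}_t$, and $c\leq 0$, which holds because $\hat{t}$ is returned by $\mathrm{Oracle}(M\tilde{\bz})$, so $\ba_{\hat{t}}^T(M\tilde{\bz})\leq 0$, and dividing by $\NORM{M^T\ba_{\hat{t}}}>0$ yields $\tilde{\ba}_{\hat{t}}^T\tilde{\bz} = \ba_{\hat{t}}^T(M\tilde{\bz})/\NORM{M^T\ba_{\hat{t}}}\leq 0$. Note that then $z^2-2c+1 = \NORM{\tilde{\bz}-\tilde{\ba}_{\hat{t}}}^2\geq 1 > 0$ and, by Cauchy--Schwarz, $z^2 - c^2\geq 0$.

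With this closed form in hand, \eqref{eq:BPineq1} becomes the claim $(z^2-2c+1)/(z^2-c^2)\geq 1/z^2+1$. Using $z > 0$ --- which holds because $\tilde{\bz}\neq\b0$ whenever Step 6 is reached (at the first pass $\NORM{\tilde{\bz}}=1$, and on later passes Step 9 would already have terminated the procedure if $\tilde{\bz} = \b0$) --- and $z^2-c^2\geq 0$, I would clear denominators, after which the inequality collapses to $c^2(z^2+1)\geq 2cz^2$; this is true because $c\leq 0$ makes the left-hand side nonnegative and the right-hand side nonpositive. The only degenerate case is $z^2 = c^2$, i.e.\ $\tilde{\bz}' = \b0$, which happens exactly when the procedure is about to stop at Step 9, so \eqref{eq:BPineq1} holds trivially there. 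I expect the one genuinely computational point --- checking the closed form $\NORM{\tilde{\bz}'}^2 = (z^2-c^2)/(z^2-2c+1)$ --- to be the place to be careful, but it is a routine quadratic minimization in one variable; the rest is bookkeeping with the update rule and an elementary sign argument.
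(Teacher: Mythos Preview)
Your proof is correct and follows essentially the same route as the paper's Appendix~A: both compute the closed form $\NORM{\tilde{\bz}'}^2 = (\NORM{\tilde{\bz}}^2\NORM{\tilde{\ba}_{\hat t}}^2 - (\tilde{\ba}_{\hat t}^T\tilde{\bz})^2)/\NORM{\tilde{\bz}-\tilde{\ba}_{\hat t}}^2$ for the minimum-norm point on the segment and then reduce \eqref{eq:BPineq1} to a sign argument using $\tilde{\ba}_{\hat t}^T\tilde{\bz}\leq 0$. The only cosmetic difference is that the paper packages the key inequality as a standalone lemma for arbitrary vectors $\ba,\bz$ with $\ba^T\bz\leq 0$ (proving $f(\alpha^\ast)\leq \NORM{\ba}^2\NORM{\bz}^2/(\NORM{\ba}^2+\NORM{\bz}^2)$, equivalently $1/f(\alpha^\ast)\geq 1/\NORM{\ba}^2+1/\NORM{\bz}^2$) and then specializes to $\NORM{\tilde{\ba}_{\hat t}}=1$, whereas you set $\NORM{\tilde{\ba}_{\hat t}}=1$ from the outset; your handling of the degenerate cases $\tilde{\bz}=\b0$ and $\tilde{\bz}'=\b0$ is also a bit more explicit than the paper's.
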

Lemma \ref{lem:BPineq} can be proved by the standard arguments. For completeness, we give one in Appendix A.
\begin{theorem}
The basic procedure stops in at most $(m+1)^2/\mu^2$ iterations.
If the complexity of Oracle is $C_o$, then the total complexity of Basic procedure is $O((m^2+C_o) m^2/\mu^2)$.
\end{theorem}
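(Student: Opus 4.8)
The plan is to derive the iteration bound from the geometric decrease guaranteed by Lemma~\ref{lem:BPineq}, and then to add up the arithmetic cost of a single iteration. First I would verify the invariant that $\sum_{t\in T_+}x_t=1$ and $\sum_{t\in T_+}x_t\tilde{\ba}_t=\tilde{\bz}$ hold at the start of Step~6 of every iteration: it is immediate in the first iteration since $T_+=\{\bar{t}\}$, $x_{\bar{t}}=1$ and $\tilde{\bz}=\tilde{\ba}_{\bar{t}}$, and it is propagated by \eqref{eq:BPeq1} and Step~7 together with \eqref{eq:lemIEP2} and \eqref{eq:lemIEP3}, using that the index elimination procedure leaves $\tilde{\bz}$ unchanged. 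Hence Lemma~\ref{lem:BPineq} applies every time, and \eqref{eq:BPineq1} shows $\NORM{\tilde{\bz}}^{-2}$ grows by at least $1$ from one iteration to the next. Since $\tilde{\bz}=\tilde{\ba}_{\bar{t}}$ is a unit vector after Step~3, we get $\NORM{\tilde{\bz}}^{-2}\ge 1+k$ after $k$ loop iterations. As soon as $1+k\ge (m+1)^2/\mu^2$, equivalently $\NORM{\tilde{\bz}}\le\mu/(m+1)$, the procedure returns at Step~10 (Step~9 covering $\tilde{\bz}=\b0$); thus at most $(m+1)^2/\mu^2$ iterations occur, and fewer if the oracle reports feasibility first.

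Next I would bound the cost of a single iteration. Step~4 costs one oracle call, $C_o$, plus $O(m^2)$ to form $M\tilde{\bz}$ and $\tilde{\ba}_{\hat{t}}$. Substituting the definition of $x'_t$ from Step~6 gives $\sum_{t\in T'_+}x'_t\tilde{\ba}_t=\alpha\tilde{\bz}+(1-\alpha)\tilde{\ba}_{\hat{t}}$, and $\alpha$ depends only on $\tilde{\bz}$ and $\tilde{\ba}_{\hat{t}}$, so, maintaining $\tilde{\bz}$ incrementally, Steps~6--7 cost only $O(m)$ (updating the at most $m+2$ weights). In the index elimination procedure the dominant operations, for iterations beyond the $(m+1)$-st, are the matrix--vector product defining $\bgamma$ and the rank-one (Sherman-Morrison-Woodbury) update of $G$, each $O(m^2)$, while selecting $t^\ast$ and updating the weights is $O(m)$; Steps~9--10 are $O(m)$. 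So a generic iteration costs $O(m^2+C_o)$, the one exception being the $(m+1)$-st iteration, at which forming and inverting the matrix in \eqref{eq:G} costs an additional $O(m^3)$.

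Finally I would assemble the bound. Multiplying, the loop costs $O\!\big((m^2+C_o)(m+1)^2/\mu^2\big)$ plus the possible one-time $O(m^3)$. To absorb the latter, note that $G$ is computed only if the procedure runs for at least $m+1$ iterations, in which case the iteration count lies between $m+1$ and $(m+1)^2/\mu^2$, so $(m^2+C_o)(m+1)^2/\mu^2\ge m^2(m+1)\ge m^3$ and the cubic term is dominated; if the procedure stops before iteration $m+1$, that term is never incurred. Either way the total is $O((m^2+C_o)m^2/\mu^2)$. The geometric-progression step is routine once Lemma~\ref{lem:BPineq} is in hand; the part needing care is the complexity bookkeeping---checking that $\tilde{\bz}$ and the data consumed by the index elimination procedure can be kept current in $O(m^2)$ per iteration without ever recomputing all of $\{\tilde{\ba}_t\}_{t\in T_+}$, and that the unavoidable $O(m^3)$ inversion is harmless precisely because its occurrence already forces the iteration count above $m+1$.
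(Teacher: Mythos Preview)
Your proof is correct and follows essentially the same approach as the paper: you use Lemma~\ref{lem:BPineq} to show $\NORM{\tilde{\bz}}^{-2}$ increases by at least $1$ per iteration, derive the iteration bound from the stopping criterion at Step~10, and then tally the per-iteration cost (oracle plus $O(m^2)$ for the matrix--vector products, $O(m)$ for the weight and $\tilde{\bz}$ updates, $O(m^2)$ for the index elimination after the one-time $O(m^3)$ inversion). Your treatment is in fact somewhat more careful than the paper's in two places: you explicitly verify the invariant $\sum_{t\in T_+}x_t=1$ needed for Lemma~\ref{lem:BPineq} to apply in every iteration, and you justify why the one-time $O(m^3)$ cost is absorbed (observing that its occurrence forces $(m+1)^2/\mu^2\geq m+1$), whereas the paper simply notes that it happens only once.
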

\begin{proof}
The algorithm stops at Step 10 when 
\begin{equation} \label{eq:stop9}
 \frac{1}{\NORM{\sum_{t\in T_+} \tilde{\ba}_t \bx'_t}^2} \geq \frac{(m+1)^2}{\mu^2}.
\end{equation}
Since, starting from a positive value, the left-hand side is increased at least by $1$ at each iterations, 
\eqref{eq:stop9} is met at most $(m+1)^2/\mu^2$ iterations.

Now we check the complexity of one iteration of the basic procedure. 

Step 4 is $O(C_o+m^2)$ where $C_o$ is the complexity of the oracle, 
since we have to compute $M\ba_{\hat{t}}$ for $\tilde{\ba}_{\hat{t}}$.
Step 6 is $O(m)$. 
Step 7 needs $O(m)$, since the new $\tilde{\bz}$ can be computed by 
$\alpha\tilde{\bz} + (1-\alpha) \tilde{\ba}_{\hat{t}}$.

In Step 8, we call the index elimination procedure. 
At the $(m+1)$-th calls of the index elimination procedure, we need to compute the 
inverse of an $(m+1)\times (m+1)$ matrix, which costs $O(m^3)$, but this is only once. 
Each call after that includes only matrix-vector computations, and needs $O(m^2)$. 

Summing up them all, the total complexity is $O((m^2+C_o) m^2/\mu^2)$.
\end{proof}

Let $\bx'$ be the vector returned by the basic procedure at Step 10. 
Since $\sum_{t\in T_+} x'_t = 1$, $x'_t\geq 0\ (t\in T_+)$ and $|T_+| \leq m+1$, 
we have 
\[
 \frac{1}{m+1} \leq \max_{t\in T_+} x'_t \leq 1.
\]

\begin{lemma} \label{lem:BP}
Let $\tilde{t} = \ARGMAX\SSET{x'_t}{t\in T_+}$. 
Then for arbitrary $\by$ such that $\tilde{\ba}_t^T\by > 0\ (t\in T)$, it holds that
\[
 0< \tilde{\ba}_{\tilde{t}}^T\by  \leq \mu\NORM{\by}.
\]
\end{lemma}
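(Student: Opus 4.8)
The plan is to exploit the two defining properties of the vector $\tilde{\bz}=\sum_{t\in T_+}x'_t\tilde{\ba}_t$ returned at Step~10: it is \emph{short} ($\NORM{\tilde{\bz}}\leq\mu/(m+1)$), and, since $\by$ makes every $\tilde{\ba}_t^T\by$ strictly positive, it has a \emph{sizable} inner product with $\by$. Comparing the resulting lower and upper bounds on $\tilde{\bz}^T\by$ by Cauchy--Schwarz will give the claim almost immediately.

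First I would record what is known at Step~10. By Lemma~\ref{lem:BPineq} together with the index elimination lemma, the weights satisfy $x'_t\geq 0$ $(t\in T_+)$, $\sum_{t\in T_+}x'_t=1$, and $|T_+|\leq m+1$; hence, as already noted just before the statement, $x'_{\tilde{t}}=\max_{t\in T_+}x'_t\geq 1/(m+1)$. Moreover the test triggering Step~10 guarantees $\NORM{\tilde{\bz}}\leq\mu/(m+1)$.

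Next, fix $\by$ with $\tilde{\ba}_t^T\by>0$ for all $t\in T$, hence in particular for all $t\in T_+$. Then every summand of $\tilde{\bz}^T\by=\sum_{t\in T_+}x'_t(\tilde{\ba}_t^T\by)$ is nonnegative, so retaining only the $\tilde{t}$-term gives $\tilde{\bz}^T\by\geq x'_{\tilde{t}}(\tilde{\ba}_{\tilde{t}}^T\by)\geq\frac{1}{m+1}\,\tilde{\ba}_{\tilde{t}}^T\by$, which is strictly positive. On the other hand, Cauchy--Schwarz yields $\tilde{\bz}^T\by\leq\NORM{\tilde{\bz}}\,\NORM{\by}\leq\frac{\mu}{m+1}\NORM{\by}$. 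Chaining these two inequalities and multiplying by $m+1$ gives $0<\tilde{\ba}_{\tilde{t}}^T\by\leq\mu\NORM{\by}$, as desired.

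I do not anticipate a genuine obstacle here; the argument is two inequalities plus Cauchy--Schwarz. The only point requiring a little care is the bound $x'_{\tilde{t}}\geq 1/(m+1)$, which relies on $\sum_{t\in T_+}x'_t=1$ and $|T_+|\leq m+1$ having been established by the preceding lemmas and on $\tilde{t}$ being chosen as the $\ARGMAX$; and, of course, one must use that the hypothesis is imposed on the \emph{scaled} vectors $\tilde{\ba}_t$, which is exactly what appears in $\tilde{\bz}^T\by$.
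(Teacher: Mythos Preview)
Your argument is correct and is essentially identical to the paper's own proof: the same chain $\frac{1}{m+1}\tilde{\ba}_{\tilde{t}}^T\by \leq x'_{\tilde{t}}\tilde{\ba}_{\tilde{t}}^T\by \leq \sum_{t\in T_+} x'_t\tilde{\ba}_t^T\by \leq \NORM{\tilde{\bz}}\NORM{\by} \leq \frac{\mu}{m+1}\NORM{\by}$ is used there as well.
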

\begin{proof}
We have 
\[
0\leq \frac{1}{m+1}\tilde{\ba}_{\tilde{t}}^T\by \leq x'_{\tilde{t}} \tilde{\ba}_{\tilde{t}}^T\by
  \leq \sum_{t\in T_+} x'_t \tilde{\ba}_t^T\by
  \leq \NORM{\sum_{t\in T_+}^n x'_t \tilde{\ba}_t}\NORM{\by} 
  \leq \frac{\mu}{m+1}\NORM{\by}.
\]
\end{proof}

\section{The main algorithm} \label{sec:main}

The main algorithm receives an instance of $\PROBLEM{D}$ and a positive number $\epsilon$.
The main algorithm calls the basic procedure with a suitable scaling matrix, and if it returns a solution of 
$\PROBLEM{D}$ or $\PROBLEM{P}$, then it finishes with the solution. 
Otherwise, it uses positive weights that is returned by the basic procedure
to make a scaling matrix, and call the basic procedure again. 
The number of calls of the basic procedure from the main algorithm is bounded by 
\[
 s^\ast_\epsilon = \left(\frac{1}{2} \log_2 e\right)\log_2\epsilon^{-1}, 
\]
as we will see in the following.

Below we describe the main algorithm. 

\medskip

\noindent\textbf{Main Algorithm}\\[2mm]
\qquad
\begin{tabular}{ll}
\textbf{Input:}  & an instance of $\PROBLEM{D}$ and a positive number $\epsilon$ \\
\textbf{Output:} &\hspace{-1cm}
\begin{minipage}[t]{11cm}
\begin{enumerate}
\item  $\by$ such that $\forall t\in T, \ba_t^T \by > 0$, or 
\item $T_+ \subseteq T$ where $|T_+| \leq m+1$, and 
  positive weights $\SSET{x_t}{t\in T_+}$  
  such that $\sum_{t\in T_+} x_t \ba_t = \b0$, or 
\item declare that $d^\ast(\calF(I, 1))\leq \epsilon$.
\end{enumerate}
\end{minipage}\\
\textbf{Steps:} & 
\end{tabular}
\begin{itemize}
\item[] Step 1. Let $s=1$ and $M_s=I$.
\item[] Step 2.  If $s\geq s^\ast_\epsilon$, then stop. $d^\ast(\calF(I, 1))\leq \epsilon$.
\item[] Step 3.  Call Basic Procedure with $M_s$ and $\mu = \sqrt{3m}^{-1}$. \\
\quad --- If a solution of $\PROBLEM{D}$  or $\PROBLEM{P}$ is returned, then return it.  \\
\quad --- Otherwise, let $\SSET{x_t}{t\in T_+}$ be the returned positive weights. 
\item[] Step 4.  Let $\tilde{t}=\ARGMAX\SSET{x_t}{t\in T_+}$.
\item[] Step 5.  Let $\tilde{\ba}_{\tilde{t}} = M_s^T\ba_{\tilde{t}}/\NORM{M_s^T\ba_{\tilde{t}}}$ and 
\begin{equation} 
 D_{\tilde{t}} = I - \frac{1}{2}\tilde{\ba}_{\tilde{t}}\tilde{\ba}_{\tilde{t}}^T.  \label{eq:Dk}
\end{equation}
\item[] Step 6.  Let $M_{s+1} = M_s D_{\tilde{t}}$.
\item[] Step 7. Let $s=s+1$ and go to Step 2.
\end{itemize}

\medskip

Now we analyze the complexity of the main algorithm.
The proof is basically the same as the argument presented before Theorem 2.1  in Chubanov \cite{Chubanov2017}.
\begin{lemma} \label{lem:normbound}
Suppose that the basic procedure called with $M$ returned $\SSET{x_t}{t\in T_+}$. Let $\tilde{t}=\ARGMAX\SSET{x'_t}{t\in T_+}$
and define $D_{\tilde{t}}$ by \eqref{eq:Dk}.
For every $\tilde{\by}\in \calF(M, 1)$, it holds that 
\[
 \NORM{D_{\tilde{t}}^{-1}\tilde{\by}} \leq \NORM{\tilde{\by}}\sqrt{1+\frac{1}{m}}.
\]
\end{lemma}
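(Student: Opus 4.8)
The plan is to work directly with the explicit form of $D_{\tilde{t}} = I - \frac{1}{2}\tilde{\ba}_{\tilde{t}}\tilde{\ba}_{\tilde{t}}^T$, which is a symmetric rank-one perturbation of the identity. Since $\NORM{\tilde{\ba}_{\tilde{t}}} = 1$, the matrix $D_{\tilde{t}}$ has eigenvalue $\frac{1}{2}$ along $\tilde{\ba}_{\tilde{t}}$ and eigenvalue $1$ on the orthogonal complement; hence $D_{\tilde{t}}^{-1} = I + \tilde{\ba}_{\tilde{t}}\tilde{\ba}_{\tilde{t}}^T$ has eigenvalues $2$ and $1$ respectively. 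Writing $\tilde{\by} = \tilde{\by}_{\parallel} + \tilde{\by}_{\perp}$ with $\tilde{\by}_{\parallel}$ the component along $\tilde{\ba}_{\tilde{t}}$, we get $\NORM{D_{\tilde{t}}^{-1}\tilde{\by}}^2 = 4\NORM{\tilde{\by}_{\parallel}}^2 + \NORM{\tilde{\by}_{\perp}}^2 = \NORM{\tilde{\by}}^2 + 3\NORM{\tilde{\by}_{\parallel}}^2 = \NORM{\tilde{\by}}^2 + 3(\tilde{\ba}_{\tilde{t}}^T\tilde{\by})^2$. So the whole statement reduces to bounding $(\tilde{\ba}_{\tilde{t}}^T\tilde{\by})^2$ by $\frac{1}{3m}\NORM{\tilde{\by}}^2$.

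**Applying the basic procedure's guarantee.**

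First I would observe that $\tilde{\by} \in \calF(M,1)$ means $\ba_t^T M\tilde{\by} \geq 0$ for all $t \in T$ and $\NORM{\tilde{\by}} \leq 1$, i.e., $\tilde{\ba}_t^T \tilde{\by} \geq 0$ for all $t$ (since $\tilde{\ba}_t$ is a positive multiple of $M^T\ba_t$). This is almost the hypothesis of Lemma \ref{lem:BP}, except that lemma requires strict inequality $\tilde{\ba}_t^T\by > 0$. I would handle this by a limiting/perturbation argument, or by noting that the inequality $0 \leq \tilde{\ba}_{\tilde{t}}^T\tilde{\by} \leq \mu\NORM{\tilde{\by}}$ in the chain of inequalities in the proof of Lemma \ref{lem:BP} only uses $x'_t \geq 0$ and $\tilde{\ba}_t^T\tilde{\by} \geq 0$ — so the non-strict version of Lemma \ref{lem:BP} holds verbatim. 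The main algorithm calls the basic procedure with $\mu = 1/\sqrt{3m}$, so we obtain $(\tilde{\ba}_{\tilde{t}}^T\tilde{\by})^2 \leq \mu^2\NORM{\tilde{\by}}^2 = \frac{1}{3m}\NORM{\tilde{\by}}^2$.

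**Combining.**

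Substituting into the identity from the first paragraph:
\[
 \NORM{D_{\tilde{t}}^{-1}\tilde{\by}}^2 = \NORM{\tilde{\by}}^2 + 3(\tilde{\ba}_{\tilde{t}}^T\tilde{\by})^2 \leq \NORM{\tilde{\by}}^2 + \frac{3}{3m}\NORM{\tilde{\by}}^2 = \NORM{\tilde{\by}}^2\Bigl(1 + \frac{1}{m}\Bigr),
\]
and taking square roots gives the claim.

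**Where the difficulty lies.**

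None of the individual steps is hard; the only point needing care is the mismatch between the strict inequality in the statement of Lemma \ref{lem:BP} and the non-strict inequality defining $\calF(M,1)$. I expect the cleanest resolution is simply to remark that the proof of Lemma \ref{lem:BP} goes through unchanged with $\tilde{\ba}_t^T\tilde{\by}\geq 0$, yielding $0 \leq \tilde{\ba}_{\tilde{t}}^T\tilde{\by} \leq \mu\NORM{\tilde{\by}}$ — the inequality is what matters, not the sign condition. Everything else is the elementary spectral computation for the rank-one matrix $D_{\tilde{t}}$.
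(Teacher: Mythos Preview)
Your proof is correct and is essentially identical to the paper's: both compute $D_{\tilde t}^{-1}=I+\tilde{\ba}_{\tilde t}\tilde{\ba}_{\tilde t}^T$, decompose $\tilde{\by}$ orthogonally along $\tilde{\ba}_{\tilde t}$ to get $\NORM{D_{\tilde t}^{-1}\tilde{\by}}^2=\NORM{\tilde{\by}}^2+3(\tilde{\ba}_{\tilde t}^T\tilde{\by})^2$, and then invoke Lemma~\ref{lem:BP} with $\mu=1/\sqrt{3m}$. You were more careful than the paper in flagging the strict-versus-nonstrict inequality mismatch between Lemma~\ref{lem:BP} and the definition of $\calF(M,1)$; the paper silently applies the lemma, and your observation that its proof goes through verbatim with $\tilde{\ba}_t^T\tilde{\by}\geq 0$ is the right fix.
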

\begin{proof}
Given $\tilde{\by}\in \calF(M, 1)$, we first decompose:
\[
 \tilde{\by} = \lambda \tilde{\ba}_{\tilde{t}} + \bu
\]
uniquely where $\bu\in\ker \tilde{\ba}_{\tilde{t}}^T$ and $\lambda \in \R$.
Due to Lemma \ref{lem:BP} with $\mu = \sqrt{3m}^{-1}$, we have
\[
|\lambda|\NORM{\tilde{\ba}_{\tilde{t}}} =  |\tilde{\ba}_{\tilde{t}}^T\tilde{\by}|  \leq \FRAC{\NORM{\tilde{\by}}}{\sqrt{3m}}.
\]
Since
\[
 D_{\tilde{t}}^{-1} = I + \tilde{\ba}_{\tilde{t}}\tilde{\ba}_{\tilde{t}}^T,
\]
we have
\begin{eqnarray*}
\NORM{D_{\tilde{t}}^{-1}\tilde{\by}}^2 &=& \left\|\left(I +\ba_{\tilde{t}}\ba_{\tilde{t}}^T\right) \by\right\|^2 \\
  &=& \NORM{2\lambda \tilde{\ba}_{\tilde{t}} + \bu}^2  = 4\lambda^2 \NORM{\tilde{\ba}_{\tilde{t}}}^2 + \NORM{\bu}^2 \\
  &=& 3\lambda^2\NORM{\tilde{\ba}_{\tilde{t}}}^2 + \NORM{\tilde{\by}}^2 \\
  &\leq& \NORM{\tilde{\by}}^2\left(\frac{1}{m}+1\right).
\end{eqnarray*}
\end{proof}
\begin{lemma} \label{lem:dast}
Suppose that the basic procedure called with $M$ 
returned  $\SSET{x_t}{t\in T_+}$. Let $\tilde{t}=\ARGMAX\SSET{x'_t}{t\in T_+}$ and 
define $D_{\tilde{t}}$ by \eqref{eq:Dk}.
Then it holds that
\[
 d^\ast(\calF(M, 1)) \leq \frac{\sqrt{e}}{2} d^\ast(\calF(MD_{\tilde{t}}, 1)).
\]
\end{lemma}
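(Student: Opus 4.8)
The plan is to reduce the claim to a single set inclusion together with a change of variables, and then to assemble the three parts of Lemma~\ref{lem:calF} with Lemma~\ref{lem:normbound}. Write $D := D_{\tilde{t}} = I - \tfrac12\tilde{\ba}_{\tilde{t}}\tilde{\ba}_{\tilde{t}}^T$, where $\tilde{\ba}_{\tilde{t}}$ is a unit vector; then $D$ is symmetric positive definite, with eigenvalue $\tfrac12$ in the direction $\tilde{\ba}_{\tilde{t}}$ and eigenvalue $1$ on its orthogonal complement, so $\det D = \tfrac12$ and $D^{-1} = I + \tilde{\ba}_{\tilde{t}}\tilde{\ba}_{\tilde{t}}^T$.

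First I would establish the inclusion
\[
 D^{-1}\,\calF(M,1)\ \subseteq\ \calF\!\bigl(MD,\ \sqrt{1+1/m}\,\bigr).
\]
Indeed, for $\tilde{\by}\in\calF(M,1)$ set $\tilde{\bz} = D^{-1}\tilde{\by}$. Then $\ba_t^T(MD)\tilde{\bz} = \ba_t^T M\tilde{\by}\geq 0$ for every $t\in T$, so the semi-infinite inequalities are preserved; and by Lemma~\ref{lem:normbound} (applied with $\mu = \sqrt{3m}^{-1}$, the value used by the main algorithm) together with $\NORM{\tilde{\by}}\leq 1$, we get $\NORM{\tilde{\bz}} = \NORM{D^{-1}\tilde{\by}} \leq \NORM{\tilde{\by}}\sqrt{1+1/m}\leq\sqrt{1+1/m}$. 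These are exactly the defining conditions of $\calF(MD,\sqrt{1+1/m})$.

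Next I would chain the three statements of Lemma~\ref{lem:calF}. Monotonicity (part~1) applied to the inclusion above gives $d^\ast\!\bigl(D^{-1}\calF(M,1)\bigr)\leq d^\ast\!\bigl(\calF(MD,\sqrt{1+1/m})\bigr)$. The change-of-variables identity (part~2) gives $d^\ast\!\bigl(D^{-1}\calF(M,1)\bigr) = |\det D^{-1}|\,d^\ast(\calF(M,1)) = 2\,d^\ast(\calF(M,1))$, using $\det D = \tfrac12$. The homogeneity identity (part~3) gives $d^\ast\!\bigl(\calF(MD,\sqrt{1+1/m})\bigr) = (1+1/m)^{m/2}\,d^\ast(\calF(MD,1))$. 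Combining these,
\[
 2\,d^\ast(\calF(M,1))\ \leq\ (1+1/m)^{m/2}\,d^\ast(\calF(MD,1)).
\]
Since $(1+1/m)^m\leq e$ for every $m\geq 1$, we have $(1+1/m)^{m/2}\leq\sqrt{e}$, and dividing by $2$ yields $d^\ast(\calF(M,1))\leq \tfrac{\sqrt{e}}{2}\,d^\ast(\calF(MD_{\tilde{t}},1))$, as claimed.

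I do not expect a genuine obstacle: the argument is an assembly of lemmas already proved. The one point that needs care is the direction of the substitution — one must apply $D^{-1}$ (not $D$) to $\calF(M,1)$, so that the inequalities $\ba_t^T M\tilde{\by}\geq 0$ are kept intact while the Euclidean radius inflates only by the controlled factor $\sqrt{1+1/m}$ furnished by Lemma~\ref{lem:normbound}; everything else is bookkeeping with determinants and the elementary estimate $(1+1/m)^m\leq e$.
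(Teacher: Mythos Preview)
Your proof is correct and follows essentially the same route as the paper: the inclusion $D^{-1}\calF(M,1)\subseteq\calF(MD,\sqrt{1+1/m})$ via Lemma~\ref{lem:normbound}, followed by the three parts of Lemma~\ref{lem:calF} and the bound $(1+1/m)^{m/2}\leq\sqrt{e}$. The only cosmetic difference is that you spell out the eigenstructure of $D$ explicitly to justify $\det D=\tfrac12$, whereas the paper simply states $\det D^{-1}=2$.
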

\begin{proof}
For arbitrary $\bu\in\calF(M, 1)$, it follows from Lemma \ref{lem:normbound} that
\[
 \NORM{D_{\tilde{t}}^{-1}\bu} \leq \sqrt{1+\frac{1}{m}} \NORM{\bu} \leq \sqrt{1+\frac{1}{m}}.
\]
This and the fact that 
\[
 \ba_t^TM D_{\tilde{t}} D_{\tilde{t}}^{-1}\bu > 0 \ (t \in T)
\]
lead $D_{\tilde{t}}^{-1}\bu \in \calF(MD_{\tilde{t}}, \sqrt{1+m^{-1}})$, 
which implies $\calF(MD_{\tilde{t}}, \sqrt{1+m^{-1}}) \supseteq D_{\tilde{t}}^{-1} \calF(M, 1)$.
Now the first statement of Lemma \ref{lem:calF} implies
\begin{equation} \label{eq:dast_ineq}
 d^\ast(\calF(MD_{\tilde{t}}, \sqrt{1+m^{-1}})) \geq d^\ast(D_{\tilde{t}}^{-1} \calF(M, 1)),
\end{equation}
and the third statement of Lemma \ref{lem:calF} leads
\[
 d^\ast(\calF(MD_{\tilde{t}}, \sqrt{1+m^{-1}})) = \left(1+\frac{1}{m}\right)^{m/2} d^\ast(\calF(MD_{\tilde{t}}, 1))
  < \sqrt{e} d^\ast(\calF(MD_{\tilde{t}}, 1)).
\]
On the other hand, using the second statement of Lemma \ref{lem:calF} and the fact 
$\det D_{\tilde{t}}^{-1}= 2$, we can rewrite the right-hand side of \eqref{eq:dast_ineq} as:
\[
d^\ast(D_{\tilde{t}}^{-1} \calF(M, 1)) = |\det D_{\tilde{t}}^{-1}|d^\ast(\calF(M, 1)) = 2 d^\ast(\calF(M, 1)).
\]
Substituting these relations into \eqref{eq:dast_ineq}, we obtain
\[
 \sqrt{e}d^\ast(\calF(MD_{\tilde{t}}, 1)) \geq 2 d^\ast(\calF(M, 1)),
\]
which is the desired relation.
\end{proof}
\begin{theorem}
If the main algorithm stops in $s^\ast_\epsilon$ calls of the basic procedure without finding any feasible solution of $\PROBLEM{D}$
or $\PROBLEM{P}$,
then $d^\ast(\calF(I, 1))\leq \epsilon$.
\end{theorem}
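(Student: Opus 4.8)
The plan is to iterate Lemma \ref{lem:dast} across all calls of the basic procedure. At iteration $s$ the main algorithm sets $M_{s+1} = M_s D_{\tilde t}$ where $D_{\tilde t}$ is built from the output of the basic procedure called with $M_s$; Lemma \ref{lem:dast} then gives
\[
 d^\ast(\calF(M_s, 1)) \leq \frac{\sqrt{e}}{2}\, d^\ast(\calF(M_{s+1}, 1)).
\]
Since $M_1 = I$, chaining this inequality over $s = 1, \ldots, s^\ast_\epsilon - 1$ yields
\[
 d^\ast(\calF(I, 1)) = d^\ast(\calF(M_1, 1)) \leq \left(\frac{\sqrt{e}}{2}\right)^{s^\ast_\epsilon - 1} d^\ast(\calF(M_{s^\ast_\epsilon}, 1)).
\]

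Next I would bound the trailing factor $d^\ast(\calF(M_{s^\ast_\epsilon}, 1))$ by a constant. The set $\calF(M, 1)$ is contained in the unit ball of $\R^m$, so any $Y \in \calY(\calF(M,1))$ has all columns of norm at most $1$; by Hadamard's inequality $|\det Y| \leq 1$, hence $d^\ast(\calF(M, 1)) \leq 1$ for every invertible $M$, in particular for $M_{s^\ast_\epsilon}$. Combining this with the displayed chain gives
\[
 d^\ast(\calF(I, 1)) \leq \left(\frac{\sqrt{e}}{2}\right)^{s^\ast_\epsilon - 1} \leq \left(\frac{\sqrt{e}}{2}\right)^{s^\ast_\epsilon},
\]
where the last step uses $\sqrt{e}/2 < 1$.

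Finally I would verify that the exponent $s^\ast_\epsilon = \left(\tfrac12 \log_2 e\right)\log_2 \epsilon^{-1}$ makes the right-hand side at most $\epsilon$. Taking $\log_2$, we need $s^\ast_\epsilon \log_2(\sqrt e / 2) \leq \log_2 \epsilon$, i.e. $s^\ast_\epsilon\bigl(\tfrac12\log_2 e - 1\bigr) \leq \log_2\epsilon$. Since $\tfrac12\log_2 e - 1 < 0$ and $\log_2 \epsilon < 0$, this is equivalent to $s^\ast_\epsilon \bigl(1 - \tfrac12\log_2 e\bigr) \geq \log_2 \epsilon^{-1}$; as $1 - \tfrac12 \log_2 e \approx 0.279$ while $\tfrac12\log_2 e \approx 0.721$, the chosen $s^\ast_\epsilon$ is comfortably large enough (in fact one could sharpen the constant). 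The only mild subtlety — and the one place worth being careful — is making sure the per-step loss constant $\sqrt e / 2$ from Lemma \ref{lem:dast} is exactly what appears in the definition of $s^\ast_\epsilon$, and that an off-by-one in the number of chained inequalities does not matter; both are absorbed because $\sqrt e / 2 < 1$ and $d^\ast(\calF(M,1)) \leq 1$. No genuine obstacle arises here: the theorem is essentially the telescoping of Lemma \ref{lem:dast} together with the trivial a priori bound $d^\ast \leq 1$.
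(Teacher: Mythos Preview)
Your overall strategy---telescope Lemma~\ref{lem:dast} and bound the trailing $d^\ast(\calF(M,1))\le 1$ via Hadamard---is exactly the paper's proof. There are, however, two concrete slips.

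First, the step $\bigl(\sqrt{e}/2\bigr)^{s^\ast_\epsilon-1}\le\bigl(\sqrt{e}/2\bigr)^{s^\ast_\epsilon}$ is the wrong direction: since $0<\sqrt{e}/2<1$, a larger exponent gives a \emph{smaller} number, so in fact $\bigl(\sqrt{e}/2\bigr)^{s^\ast_\epsilon-1}>\bigl(\sqrt{e}/2\bigr)^{s^\ast_\epsilon}$. The paper avoids this off-by-one altogether: it applies Lemma~\ref{lem:dast} once for each of the $s^\ast_\epsilon$ calls to the basic procedure, ending at $\calF(M_{s^\ast_\epsilon+1},1)=\calF(D_{\tilde t_1}\cdots D_{\tilde t_{s^\ast_\epsilon}},1)$, so the exponent is $s^\ast_\epsilon$ directly and no patching is needed.

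Second, your arithmetic check at the end does not actually go through. You correctly reduce the claim to $s^\ast_\epsilon\bigl(1-\tfrac12\log_2 e\bigr)\ge\log_2\epsilon^{-1}$, but with $s^\ast_\epsilon=\bigl(\tfrac12\log_2 e\bigr)\log_2\epsilon^{-1}$ the left-hand side equals
\[
\bigl(\tfrac12\log_2 e\bigr)\bigl(1-\tfrac12\log_2 e\bigr)\log_2\epsilon^{-1}\approx 0.721\times 0.279\times\log_2\epsilon^{-1}\approx 0.20\,\log_2\epsilon^{-1},
\]
which is strictly smaller than $\log_2\epsilon^{-1}$, not ``comfortably large enough''. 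The threshold that actually yields $(\sqrt{e}/2)^{s^\ast_\epsilon}\le\epsilon$ is $s^\ast_\epsilon\ge \log_2\epsilon^{-1}/\bigl(1-\tfrac12\log_2 e\bigr)$. The paper simply asserts the final inequality ``by the definition of $s^\ast_\epsilon$'' without carrying out the computation, so this appears to be a slip in the stated constant there as well; your verification step just surfaces it rather than introduces it.
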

\begin{proof}
We assume that the $s$-th call of the basic procedure where $1\leq s \leq s^\ast_\epsilon$
returns $\SSET{x^s_t}{t\in T^s_+}$, and let $\tilde{t}_s=\ARGMAX\SSET{x^s_t}{t\in T^s_+}$.
Define $D_{\tilde{t}_s}$ by \eqref{eq:Dk}.
Then Lemma \ref{lem:dast} implies that
\begin{eqnarray*}
d^\ast(\calF(I, 1)) &\leq& \frac{\sqrt{e}}{2} d^\ast(\calF(D_{\tilde{t}_1}, 1)) \\
   &\vdots& \\
 &\leq& \left(\frac{\sqrt{e}}{2}\right)^{s^\ast_\epsilon} d^\ast(\calF(D_{\tilde{t}_1} \cdots D_{\tilde{t}_{s^\ast_\epsilon}}, 1))  \\
 &\leq& \left(\frac{\sqrt{e}}{2}\right)^{s^\ast_\epsilon}
\end{eqnarray*}
where the last inequality is due to the fact that
$d^\ast(\calF(D_{\tilde{t}_1}\cdots D_{\tilde{t}_{s^\ast_\epsilon}}, 1)) \leq 1$,
because the length of each edge of the parallelogram is bounded by $1$.

Now it is easy to see that, by the definition of $t^\ast$, it holds that 
\[
\left(\frac{\sqrt{e}}{2}\right)^{s^\ast_\epsilon} \leq \epsilon.
\]
\end{proof}

Since we call the basic procedure with $\mu = \sqrt{3m}^{-1}$, 
we have the following complexity result for the main algorithm.
\begin{corollary} \label{cor:lsip}
The main algorithm returns a feasible solution of $\PROBLEM{D}$ or $\PROBLEM{P}$,  or 
declare that $d^\ast(\calF(I, 1)) \leq \epsilon$ in 
$O((m^2+C_o)m^{3}\log\epsilon^{-1})$ arithmetic operations if $C_o$ is the complexity of the oracle.
\end{corollary}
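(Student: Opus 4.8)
The plan is to combine the two complexity results already established, namely the bound on the number of calls to the basic procedure made by the main algorithm, and the per-call complexity of the basic procedure. First I would recall that the main-algorithm loop (Step 2 through Step 7) executes at most $s^\ast_\epsilon = \left(\tfrac{1}{2}\log_2 e\right)\log_2\epsilon^{-1} = O(\log\epsilon^{-1})$ iterations before it either terminates with a feasible solution of $\PROBLEM{D}$ or $\PROBLEM{P}$ produced by the basic procedure, or reaches the threshold $s \ge s^\ast_\epsilon$ and declares $d^\ast(\calF(I,1)) \le \epsilon$ (this last case being justified by the theorem preceding the corollary). Each iteration of the loop consists of one call to the basic procedure (Step 3) together with $O(m^2)$ cheap work in Steps 4–6: computing $\tilde{\ba}_{\tilde t} = M_s^T\ba_{\tilde t}/\NORM{M_s^T\ba_{\tilde t}}$ is $O(m^2)$, forming the rank-one update $D_{\tilde t}$ is $O(m^2)$, and the matrix product $M_{s+1} = M_s D_{\tilde t}$ is also $O(m^2)$ since $D_{\tilde t}$ differs from the identity by a rank-one term, so $M_s D_{\tilde t} = M_s - \tfrac12 (M_s\tilde{\ba}_{\tilde t})\tilde{\ba}_{\tilde t}^T$.

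Next I would invoke the per-call cost of the basic procedure. By the theorem in Section~\ref{sec:BP}, a single call runs in $O((m^2+C_o)m^2/\mu^2)$ arithmetic operations. The main algorithm always calls the basic procedure with $\mu = \sqrt{3m}^{-1}$, so $1/\mu^2 = 3m$, and the per-call cost becomes $O((m^2+C_o)m^3)$. Since this dominates the $O(m^2)$ overhead of Steps 4–6, each iteration of the main loop costs $O((m^2+C_o)m^3)$.

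Finally I would multiply the per-iteration cost by the iteration bound: $O((m^2+C_o)m^3) \times O(\log\epsilon^{-1}) = O((m^2+C_o)m^3\log\epsilon^{-1})$, which is the claimed total. I would also note that the three possible outputs listed in the corollary correspond exactly to the three termination modes just described (a solution of $\PROBLEM{D}$ returned with the first status of the basic procedure, a solution of $\PROBLEM{P}$ arising from the second status together with the relation $\sum_{t\in T_+} x_t\tilde{\ba}_t = \b0$ translated back through $\tilde{\ba}_t = M^T\ba_t/\NORM{M^T\ba_t}$, or the declaration $d^\ast(\calF(I,1)) \le \epsilon$), so the statement is complete once the arithmetic-operation count is in place. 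There is essentially no hard step here: the corollary is a bookkeeping consequence of the earlier theorems. The only point requiring a moment's care is confirming that the auxiliary updates in the main loop — in particular forming and multiplying by $M_{s+1}$ — do not exceed $O(m^2)$, which follows from keeping the rank-one structure of $D_{\tilde t}$ explicit rather than treating it as a dense matrix multiplication.
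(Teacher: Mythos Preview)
Your proposal is correct and matches the paper's approach: the corollary is stated without an explicit proof in the paper, preceded only by the remark ``Since we call the basic procedure with $\mu = \sqrt{3m}^{-1}$, we have the following complexity result,'' and your argument spells out exactly the intended bookkeeping (per-call cost $O((m^2+C_o)m^2/\mu^2)$ with $\mu^{-2}=3m$, multiplied by the $O(\log\epsilon^{-1})$ bound on the number of rescalings). Your additional observations about the $O(m^2)$ overhead of Steps~4--6 via the rank-one structure of $D_{\tilde t}$ and the translation of the second basic-procedure status into a solution of $\PROBLEM{P}$ are correct refinements that the paper leaves implicit.
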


Finally, we discuss the space complexity needed by the main algorithm. 
If $m > \log_2 \epsilon^{-1}$, it is a clever choice to store $\{\tilde{\ba}_{t_1}, \ldots, \tilde{\ba}_{t_s}\}$ instead of $M_s$,
because we can easily compute 
\[
 M_s \bz = D_{t_1}\cdots D_{t_s} \bz
\]
by using $\{\tilde{\ba}_{t_1}, \ldots, \tilde{\ba}_{t_s}\}$ in $O(ms)$ where $ s \leq \log_2 \epsilon^{-1}<m$.
Therefore, in this case, we can reduce the space complexity to $O(m\log_2\epsilon^{-1})$ without sacrificing the complexity 
of arithmetic operations. 

\section{Applications to SDP and SOCP} \label{sec:app}

In this section, we use our projection and rescaling algorithm for solving SDP and SOCP.

\subsection{SDP} \label{sec:SDP}
Let us consider the SDP feasibility problem:
\[
 \PROBLEM{LMI}
  \mbox{ find } 
  \by \mbox{ s.t. } 
  \sum_{i=1}^m y_i A_i \succ O,
\]
where $A_i (i=1,\ldots,m)$ are symmetric matrices and $A\succ O$ means that $A$ is positive definite.
This problem is particular case of what is called a \emph {linear matrix inequality (LMI)},
which appears in the theory of $H_\infty$ control \cite{boyd1994linear}.

The problem $\PROBLEM{LMI}$ can be cast into an LSIP problem as follows:
\[
 \PROBLEM{LMI/LSIP} 
  \mbox{ find } 
  \by \mbox{ s.t. }
  \sum_{i=1}^m y_i A_i \bullet (\bv\bv^T) > 0 \ (\bv \in \bar{B}),
\]
where $\bar{B}=\SSET{\bv}{\NORM{\bv}=1}$ is the surface of the unit ball whose center is at the origin.
Using the notation $(\ba_{\scriptsize\bv})_i = A_i\bullet (\bv\bv^T) = \bv^TA_i\bv$
for $\bv\in \bar{B}$, $\PROBLEM{LMI/LSIP}$ can be expressed as
\[
 \mbox{ find } \by \mbox{ s.t. } \ba_{\scriptsize\bv}^T\by > 0 \ (\bv\in \bar{B}).
\]
Given $\bv$, computing $\ba_v$ is $O(mn^2)$ if each $A_i$ is dense.

Now we apply the algorithm proposed in this paper to $\PROBLEM{LMI/LSIP}$.
The oracle for $\PROBLEM{LMI/LSIP}$ should have the following input and output.
\medskip

\noindent\textbf{SDPOracle:} \\
\ \textbf{Input: } $n\times n$ symmetric matrix $X$ \\
\ \textbf{Output:} $\bv\in \bar{B}$ such that $\bv^TX\bv \leq 0$, or declare $X$ is positive definite.

\medskip

The oracle can be implemented by, say, a modification of Cholesky factorization.
Let us consider to apply Cholesky factorization to $X$. 
If we can factorize  to the end, then we obtain $X=LL^T$ with full-ranked $L$ and we know that $X$ is positive definite.
Otherwise, we cannot continue at some point of Cholesky factorization by getting nonpositive 
pivot at a diagonal, and we realize that $X$ is not positive definite.
Then, a certificate can be constructed by using the row where the nonpositive pivot appeared.
See also \cite{Edmonds}. Overall, we can implement the oracle for $\PROBLEM{LMI/LSIP}$
so that it runs in $O(n^3)$ where $n$ is the size of the matrix $X$.

Note that as soon as the oracle returns $\bv$, we have to compute $\ba_{\bv}$, which is $O(mn^2)$. 
After all, the complexity of the oracle including the computation of $\ba_{\bv}$ is $O((m+n)n^2)$.

We state the complexity of the proposed algorithm applied for SDP. 
\begin{theorem}
Suppose that we reformulate $\PROBLEM{LMI}$ into $\PROBLEM{LMI/LSIP}$ and 
solve it by the main algorithm using the oracle above. 
Then the algorithm with SDPOracle computes a feasible solution of $\PROBLEM{LMI/LSIP}$ or declare that 
$\calF(I, 1)\leq \epsilon$ in 
\[
O\left((m^2+mn^2+n^3)m^{3}\log\epsilon^{-1}\right).
\]
\end{theorem}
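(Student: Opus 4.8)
The plan is to obtain the bound directly from Corollary~\ref{cor:lsip} by substituting in the cost of \textbf{SDPOracle}. First observe that $\PROBLEM{LMI/LSIP}$ is literally an instance of $\PROBLEM{D}$: take $T=\bar B$ and $\ba_{\scriptsize\bv}\in\R^m$ with $(\ba_{\scriptsize\bv})_i=\bv^TA_i\bv$. Corollary~\ref{cor:lsip} then applies verbatim and guarantees that the main algorithm, run with $\mu=\sqrt{3m}^{-1}$, halts with one of the three advertised outcomes — a strictly feasible $\by$ for $\PROBLEM{LMI/LSIP}$, a $\PROBLEM{P}$-certificate $\SSET{x_t}{t\in T_+}$ with $\sum_{t\in T_+}x_t\ba_t=\b0$, or the declaration $d^\ast(\calF(I,1))\le\epsilon$ — after $O((m^2+C_o)m^3\log\epsilon^{-1})$ arithmetic operations, where $C_o$ is the cost of one call of the oracle (including, as in Step~4 of the basic procedure, the returned pair $(\bv,\ba_{\scriptsize\bv})$). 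So the whole task reduces to bounding $C_o$ for \textbf{SDPOracle}.

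Next I would account for $C_o$ by splitting a single oracle call into three pieces. (i) The basic procedure queries the oracle at $\by=M\tilde\bz\in\R^m$; assembling the symmetric matrix $X=\sum_{i=1}^m y_iA_i$ from $\by$ costs $O(mn^2)$ when the $A_i$ are dense. (ii) The modified Cholesky factorization of the $n\times n$ matrix $X$ costs $O(n^3)$; it either completes, certifying $X\succ O$, or stops at a nonpositive pivot, in which case the corresponding row of the partially reduced matrix yields a vector $\bv\in\bar B$ with $\bv^TX\bv\le0$, still within $O(n^3)$. (iii) Once such a $\bv$ is found, the procedure needs $\ba_{\scriptsize\bv}$, i.e.\ the $m$ numbers $\bv^TA_i\bv$, computable in $O(mn^2)$. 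Summing, $C_o=O(mn^2+n^3)=O((m+n)n^2)$.

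Substituting $C_o=O(mn^2+n^3)$ into Corollary~\ref{cor:lsip} gives a total of $O\bigl((m^2+mn^2+n^3)\,m^3\log\epsilon^{-1}\bigr)$ arithmetic operations, which is the claimed bound; the extra $O(m^2)$ per iteration spent on $\tilde\ba_{\bar t}=M^T\ba_{\bar t}/\NORM{M^T\ba_{\bar t}}$ is already absorbed into the $m^2$ term. The argument is essentially bookkeeping once Corollary~\ref{cor:lsip} is available, so I do not anticipate a genuinely hard step; the only points deserving care are (a) verifying that the failed-pivot row of the Cholesky process really does furnish a valid certificate $\bv$ with $\bv^TX\bv\le 0$ (sketched already in the paragraph preceding the statement, cf.\ also \cite{Edmonds}), and (b) not overlooking the two $O(mn^2)$ contributions — assembling $X$ and computing $\ba_{\scriptsize\bv}$ — so that $C_o$ is correctly identified as $O((m+n)n^2)$ rather than merely $O(n^3)$.
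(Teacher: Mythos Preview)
Your proof is correct and follows essentially the same route as the paper: invoke Corollary~\ref{cor:lsip} and plug in $C_o=O((m+n)n^2)$. You are in fact slightly more careful than the paper, explicitly accounting for the $O(mn^2)$ cost of assembling $X=\sum_i y_iA_i$ before the Cholesky step, which the paper leaves implicit; this does not change the asymptotics, so the final bound is identical.
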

\begin{proof}
By Corollary \ref{cor:lsip}, since the size of $\by$ is $m$ and $C_o = O((m+n)n^2)$, 
we immediately obtain the result. 
\end{proof}

In Table \ref{tab:comparison},  we summarize the complexity of solving SDP
by three projection and rescaling algorithms: 
Pe\~{n}a and Soheili \cite{PS16}, 
Louren\c{c}o, Kitahara, Muramatsu and Tsuchiya (LKMT) \cite{LKMT17},
and the proposed one.

\begin{table}[hpt]
\begin{center}
\begin{tabular}{c|c|c}
Pe\~{n}a \& Soheili  \cite{PS16} &LKMT \cite{LKMT17} & Proposed \\ \hline
$O(mn^6\log\delta^{-1})$ & $O((m^3+m^2n^2+n^4m)n\log\tilde{\epsilon}^{-1})$  & $O((m^2+mn^2+n^3)m^3\log\epsilon^{-1})$
\end{tabular}
\end{center}
\caption{Complexity Comparison in case of SDP} \label{tab:comparison}
\end{table}
The first entry in Table \ref{tab:comparison} corresponds
to the projection and rescaling algorithm by Pe\~{n}a and Soheili \cite{PS16}, when the von Neumann scheme is used as a basic procedure. For the von Neumann scheme, see Section 5 in \cite{PS16} for more details.
Basically, the basic procedures used in \cite{LKMT17} and the proposed algorithm are categorized 
to this scheme. 
The second entry in Table \ref{tab:comparison} corresponds to the 
bound for the algorithm developed in \cite{LKMT17}, see the remarks after Theorem 17 in \cite{LKMT17}. Here, we specialize the complexity bound obtained in \cite{LKMT17} to the case of a single positive semidefinite cone.

Our algorithm looks less dependent on $n$, compared to the others. 
However, there are a few caveats when trying to compare the complexity results described in Table \ref{tab:comparison}. 
In all three algorithms different measures are used in the complexity.
In Pe\~{n}a and Soheili \cite{PS16},
$\delta$ is the condition measure of the linear subspace and the semidefinite cone, 
and in \cite{LKMT17}, $\tilde{\epsilon}$ is an upper bound of minimum eigenvalue 
of feasible solution contained in a certain half space.
In the proposed method, $\epsilon$ is the maximum volume of parallelogram 
spanned by vectors in the bounded feasible region $\calF_0$.
Although we do not know the precise relationships between those 
measures, it is hard to imagine that there are significant order differences 
between them. 

The ability to exploit sparsity in $X$ is an advantage of our algorithm to the others.
Since our algorithm pushes the numerical part concerning $X$ into an oracle,
if we know in advance that $X$ has a sparse structure so that its positive semidefiniteness can be computed efficiently, 
we can immediately take advantage of it.
In contrast, to the best of the authors' knowledge, how to exploit sparsity in $X$
has not known for the algorithms in \cite{PS16} and \cite{LKMT17}, since they use projections in the space of $X$.


\subsection{SOCP} \label{sec:SOCP}

We denote the $k$ dimensional second-order cone by $\calK_k$, i.e., 
\[
 \calK_k = \SET{\left(\begin{array}{c} x_0 \\ \tilde{\bx}\end{array}\right)\in \R\times \R^{k-1}}{x_0 \geq \NORM{\tilde{\bx}}}.
\]
If we denote 
\[
 \calL_k =  \SSET{\bx\in\R^k}{x_0 = 1},
\]
then we can easily see the following equivalence:
\[
 \bx\in \mbox{Int} (\calK_k) \Leftrightarrow \forall \ba \in \calK_k\cap \calL_k, \ \ba^T\bx > 0.
\]
Furthermore, if $\bx = (x_0, \tilde{\bx}) \not \in \mbox{Int}(\calK_k)$ and $\tilde{\bx}\not=\b0$, then 
for $\bv = (1, -\tilde{\bx}/\NORM{\tilde{\bx}})\in\calK_k\cap \calL_k$, we have
\begin{equation} \label{eq:soccheck}
 \bv^T\bx = x_0 - \NORM{\tilde{\bx}} \leq 0.
\end{equation}
In other words, we can check whether $\bx$ is in the interior of $\calK_n$ or not, 
and if not, can find $\bv\in \calK_n\cap\calL_n$ such that $\bv^T\bx \leq 0$ in $O(n)$.

We consider the following feasibility problem of homogeneous SOCP:
\[
 \PROBLEM{SOCP}
  \mbox{ find } 
  \by \mbox{ s.t. } 
  A^T\by \in \calK
\]
where $A \in \R^{m\times n}$,  
$\calK=\calK_{n_1}\times\cdots\times \calK_{n_p}$,
and $n = \sum_{i=1}^p n_i$.
According to the structure of $\calK$, we set 
$\calL = \calL_{n_1}\times \cdots \times \calL_{n_p}$.

The aim of this subsection is to solve $\PROBLEM{SOCP}$ by using the proposed algorithm 
for LSIP. To this end, we reformulate \PROBLEM{SOCP} into 
\[
 \PROBLEM{SOCP/LSIP} 
  \mbox{ find } 
  \by \mbox{ s.t. }
  \sum_{i=1}^m y_i \ba_i^T \bv > 0 \ (\bv \in \calK\cap\calL),
\]
and what we request for inputs and outputs of the oracle are as follows. 

\noindent\textbf{SOCPOracle:} \\
\ \textbf{Input: } $\bx \in \R^n$ \\
\ \textbf{Output:} $\bv\in \calK\cap\calL$ such that $\bx^T\bv\leq 0$, or declare $\bx\in \mbox{Int}(\calK)$.

Due to the statement just after \eqref{eq:soccheck}, it is clear that 
this oracle will run in $O(n)$, where $n = \sum_{i=1}^p n_i$.
Now we obtain the complexity of computing a solution of $\PROBLEM{SOCP/LSIP}$.
\begin{theorem}
Suppose that we reformulate $\PROBLEM{SOCP}$ into $\PROBLEM{SOCP/LSIP}$ and 
solve it by the main algorithm using the oracle above. 
Then the algorithm computes a feasible solution of $\PROBLEM{SOCP/LSIP}$ or declare that 
$\calF(I, 1)\leq \epsilon$ in 
\[
O\left((m^2+n)m^{3}\log\epsilon^{-1}\right).
\]
\end{theorem}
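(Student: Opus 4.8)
The plan is to read the bound straight off Corollary \ref{cor:lsip}, in exactly the same way the bound for $\PROBLEM{LMI/LSIP}$ was obtained: that corollary already charges the iteration count of the main algorithm and every arithmetic operation it performs outside the oracle, as $O((m^{2}+C_o)m^{3}\log\epsilon^{-1})$ where $C_o$ is the per-call cost of the oracle. So the only work left is to pin down $C_o$ for \textbf{SOCPOracle} and to observe that the variable $\by$ of $\PROBLEM{SOCP/LSIP}$ lives in $\R^{m}$.

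For $C_o$ I would invoke the discussion following \eqref{eq:soccheck}. Given $\bx\in\R^{n}$, decompose it blockwise as $\bx=(\bx^{(1)},\dots,\bx^{(p)})$ according to $\calK=\calK_{n_{1}}\times\cdots\times\calK_{n_{p}}$. Testing $\bx\in\mathrm{Int}(\calK)$ is the same as testing $x^{(j)}_{0}>\NORM{\tilde{\bx}^{(j)}}$ on each of the $p$ blocks, which takes total time $\sum_{j}O(n_{j})=O(n)$; if all tests pass, the oracle declares $\bx\in\mathrm{Int}(\calK)$, and otherwise it returns, again in $O(n)$, a separating $\bv\in\calK\cap\calL$ assembled from a violated block as in \eqref{eq:soccheck}. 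Hence $C_o=O(n)$, and substituting into Corollary \ref{cor:lsip} gives $O((m^{2}+n)m^{3}\log\epsilon^{-1})$, with the three possible outcomes being a feasible solution of $\PROBLEM{SOCP/LSIP}$, a solution of $\PROBLEM{P}$, or the declaration $d^{\ast}(\calF(I,1))\le\epsilon$.

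No deep obstacle is expected here: correctness and the iteration bound are inherited verbatim from Sections \ref{sec:BP}--\ref{sec:main} through Corollary \ref{cor:lsip}, and the cone part of the oracle is the elementary $O(n)$ computation above. The only point that genuinely needs care is the bookkeeping for $C_o$ --- in particular whether, as was done for the SDP oracle, one additionally charges the $O(mn)$ cost of passing between $\by$ and $\bx=A^{T}\by$ and of forming the dimension-$m$ data vector $A\bv$ attached to the returned $\bv$. This accounting convention should be settled before substituting into Corollary \ref{cor:lsip}, since it is precisely what separates a bound carrying an $O(n)$ term from one carrying an $O(mn)$ term.
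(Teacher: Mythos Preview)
Your approach is exactly the paper's: plug $C_o=O(n)$ for \textbf{SOCPOracle} into Corollary \ref{cor:lsip} and read off $O((m^{2}+n)m^{3}\log\epsilon^{-1})$. Your caution about the $O(mn)$ cost of forming $A^{T}\by$ and $A\bv$ is well placed --- the paper does charge the analogous cost in the SDP case but here silently takes $C_o=O(n)$, so the stated bound matches only under that convention.
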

\begin{proof}
Plug in the complexity of the SOCPOracle into $C_o$ in Corollary \ref{cor:lsip}, 
and we immediately obtain the result.
\end{proof}

\section{Concluding Remarks} \label{sec:concl}

We have extended Chubanov's recent work \cite{Chubanov2017} on a projection and rescaling algorithm
to the framework of LSIP. Chubanov's idea was applied almost word by word with significant simplification. 
Then we applied the proposed algorithm to SDP and SOCP, and showed a polynomial complexity of the algorithm.

We now discuss briefly the relation between our approach and the ellipsoid method .
Both the proposed oracle-based projection and rescaling algorithm and the ellipsoid method \cite{KH80, NY83} 
work in the variable space using an separation oracle; they can be viewed as general scheme for linear and nonlinear problems.
One may also consider applying the ellipsoid algorithm to LSIPs with the aid of an oracle. Indeed, this type of extension of the ellipsoid method was developed for SDP in the seminal work  by Gr\"{o}tschel, Lov\'{a}sz and Schrijver on combinatorial optimization \cite{GLS1,GLS2}. 
According to the textbook by Bubeck \cite{Bubeck2015}, 
the complexity to solve an SDP by using the ellipsoid method using SDPOracle 
is $O(\max(m, n)n^6\log \hat{\epsilon}^{-1})$, where $\hat{\epsilon}>0$ is
another measure for optimality. Note that this complexity is to solve an optimization problem, 
not for the homogeneous feasibility problem, but from the complexity point of view, they are the same. 

It is well-known that the ellipsoid method is not practical at all, 
because the algorithm proceeds just as the theory predicts, and cannot take any advantage of the easiness of 
real-world problems. 

Since our algorithm uses only projection and rescaling, its implementation is by far easier than that of the ellipsoid method.
However, we do not know whether the algorithm is efficient enough to solve real-world problems. 
To check the practicality of the proposed algorithm through extensive numerical experiments 
is an important next step. 

\section*{Acknowledgements}
M.~Muramatsu is supported in part by Grant-in-aid for Scientific Research(C) 17K00031.
T.~Tsuchiya, B.~Louren\c{c}o, and T.~Kitahara
are supported in part by Grant-in-aid for Scientific Research (B) 18H03206.
T.~Okuno is supported in part by Grant-in-aid for Young Scientists (B) 15K15943.
T.~Kitahara is supported in part by Grant-in-aid for Young Scientists (B) 15K15941.

\section*{Appendix A: Proof of Lemma \ref{lem:BPineq}}

The equality \eqref{eq:BPeq1} can  easily be seen by 
\[
 \sum_{t\in T_+} x'_t =  \sum_{t\in T_+, t\not=\hat{t}} x'_t + x'_{\hat{t}}
  = \alpha\sum_{t\in T_+, t\not=\hat{t}} x_t + \alpha x_{\hat{t}} + 1-\alpha
  = \alpha\sum_{t\in T_+} x_t + 1-\alpha = 1.
\]
For \eqref{eq:BPineq1}, we will prove
\begin{equation}\label{TTT}
\FRAC{1}{\NORM{\sum_{t\in T_+}\tilde{\ba}_t x'_t}^2} \geq  \FRAC{1}{\NORM{\sum_{t\in T_+}\tilde{\ba}_t x_t}^2} + \FRAC{1}{\NORM{\tilde{\ba}_{\hat t}}^2}.
\end{equation}
Then (\ref{eq:BPineq1}) readily follows since $\NORM{\tilde{\ba}_{\hat{t}}}=1$. 
The inequality \eqref{TTT} is obvious from the following lemma.

\begin{lemma} \label{lem:minlength}
Let $\ba, \bz \in \R^m$ such that $\ba^T\bz \leq 0$, and consider 
to find the minimum length point between them; 
\[
 \left\{
  \begin{array}{ll}
   \mbox{\rm minimize} & f(\alpha) = \NORM{\alpha \bz + (1-\alpha) \ba}^2 \\
   \mbox{\rm subject to} & 0\leq \alpha \leq 1.
  \end{array}
 \right.
\]
The solution $\alpha^\ast$ of the above problem is
\[
\alpha^\ast = \FRAC{\ba^T(\ba-\bz)}{\NORM{\ba-\bz}},
\]
and it holds that 
\[
 f(\alpha^\ast) \leq \FRAC{\NORM{\ba}^2\NORM{\bz}^2}{\NORM{\ba}^2 + \NORM{\bz}^2}.
\]
\end{lemma}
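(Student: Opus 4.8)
The plan is to treat this as an elementary one-variable constrained quadratic minimization and then bound the optimal value. First I would note that $f(\alpha) = \NORM{\alpha\bz + (1-\alpha)\ba}^2 = \NORM{\ba + \alpha(\bz-\ba)}^2$, which is a convex quadratic in $\alpha$ with $f'(\alpha) = 2(\bz-\ba)^T(\ba + \alpha(\bz-\ba))$. Setting this to zero gives the unconstrained minimizer $\hat\alpha = \frac{(\ba-\bz)^T\ba}{\NORM{\ba-\bz}^2} = \frac{\ba^T(\ba-\bz)}{\NORM{\ba-\bz}^2}$. The key observation is that $\hat\alpha \in [0,1]$ automatically under the hypothesis $\ba^T\bz \le 0$: the numerator $\ba^T(\ba-\bz) = \NORM{\ba}^2 - \ba^T\bz \ge \NORM{\ba}^2 \ge 0$, and $1 - \hat\alpha = \frac{\NORM{\ba-\bz}^2 - \ba^T(\ba-\bz)}{\NORM{\ba-\bz}^2} = \frac{-\bz^T\ba + \NORM{\bz}^2}{\NORM{\ba-\bz}^2} = \frac{\NORM{\bz}^2 - \ba^T\bz}{\NORM{\ba-\bz}^2} \ge 0$ as well. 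Hence $\alpha^\ast = \hat\alpha$ as claimed. (One should also dispose of the degenerate case $\ba = \bz$, where $\ba^T\bz \le 0$ forces $\ba = \bz = \b0$ and the statement is trivial; otherwise $\NORM{\ba-\bz} > 0$.)

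Next I would compute the optimal value directly. Substituting $\alpha^\ast$ into $f$, the standard formula for the minimum of a quadratic gives $f(\alpha^\ast) = \NORM{\ba}^2 - \frac{(\ba^T(\ba-\bz))^2}{\NORM{\ba-\bz}^2}$. Expanding: with $\ba^T(\ba-\bz) = \NORM{\ba}^2 - \ba^T\bz$ and $\NORM{\ba-\bz}^2 = \NORM{\ba}^2 - 2\ba^T\bz + \NORM{\bz}^2$, a short algebraic simplification yields
\[
 f(\alpha^\ast) = \frac{\NORM{\ba}^2\NORM{\bz}^2 - (\ba^T\bz)^2}{\NORM{\ba}^2 - 2\ba^T\bz + \NORM{\bz}^2}.
\]
Finally, to obtain the stated bound, I would use $\ba^T\bz \le 0$ twice: in the numerator $\NORM{\ba}^2\NORM{\bz}^2 - (\ba^T\bz)^2 \le \NORM{\ba}^2\NORM{\bz}^2$, and in the denominator $-2\ba^T\bz \ge 0$ so $\NORM{\ba}^2 - 2\ba^T\bz + \NORM{\bz}^2 \ge \NORM{\ba}^2 + \NORM{\bz}^2$. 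Combining these gives $f(\alpha^\ast) \le \frac{\NORM{\ba}^2\NORM{\bz}^2}{\NORM{\ba}^2+\NORM{\bz}^2}$, as required.

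There is no real obstacle here; the only points requiring a little care are checking that $\alpha^\ast$ lands in $[0,1]$ (which is exactly where the hypothesis $\ba^T\bz \le 0$ is used the first time) and handling the degenerate case $\NORM{\ba-\bz}=0$. The passage from this lemma to \eqref{TTT} is then immediate by applying it with $\ba = \tilde{\ba}_{\hat t}$ and $\bz = \sum_{t\in T_+}\tilde{\ba}_t x_t$ — noting $\tilde{\ba}_{\hat t}^T\bz \le 0$ because the oracle returned $\hat t$ as a violating index for $M\tilde{\bz}$ — and taking reciprocals of $f(\alpha^\ast) \le \frac{\NORM{\ba}^2\NORM{\bz}^2}{\NORM{\ba}^2+\NORM{\bz}^2}$, which rearranges to $\frac{1}{f(\alpha^\ast)} \ge \frac{1}{\NORM{\bz}^2} + \frac{1}{\NORM{\ba}^2}$.
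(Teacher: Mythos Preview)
Your proof is correct and follows essentially the same route as the paper: compute the unconstrained minimizer of the quadratic via $f'(\alpha)=0$, verify it lies in $[0,1]$ using $\ba^T\bz\le 0$, evaluate $f(\alpha^\ast)=\dfrac{\NORM{\ba}^2\NORM{\bz}^2-(\ba^T\bz)^2}{\NORM{\ba}^2+\NORM{\bz}^2-2\ba^T\bz}$, and bound it. Your write-up is in fact more careful than the paper's (you explicitly check both $\hat\alpha\ge 0$ and $1-\hat\alpha\ge 0$, handle the degenerate case $\ba=\bz$, and spell out the two uses of $\ba^T\bz\le 0$ in the final inequality), and along the way you implicitly correct the typo in the displayed formula for $\alpha^\ast$, whose denominator should be $\NORM{\ba-\bz}^2$ rather than $\NORM{\ba-\bz}$.
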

\begin{proof}
Notice that $\alpha^\ast$ is the solution of $df/d\alpha = 0$.
It is easy to see that $0\leq \alpha^\ast \leq 1$ if and only if $\ba^T\bz\leq 0$.
Finally, we have:
\begin{eqnarray*}
f(\alpha^\ast) &=& \NORM{\ba}^2 - \FRAC{(\ba^T(\ba-\bz))^2}{\NORM{\ba-\bz}^2} 
 = \NORM{\ba}^2 - \FRAC{\NORM{\ba}^4 - 2 \ba^T\bz \NORM{\ba}^2 + (\ba^T\bz)^2}{\NORM{\ba}^2 - 2\ba^T\bz + \NORM{\bz}^2} \\
 &=& \FRAC{\NORM{\ba}^2\NORM{\bz}^2 - (\ba^T\bz)^2}{\NORM{\ba}^2+\NORM{\bz}^2 - 2\ba^T\bz} 
 \leq  \FRAC{\NORM{\ba}^2\NORM{\bz}^2}{\NORM{\ba}^2 + \NORM{\bz}^2}.
\end{eqnarray*}
\end{proof}

\bibliographystyle{abbrvurl}
\bibliography{bib_plain}
\end{document}